\newtheorem{theorem}{Theorem}
\newtheorem{corollary}{Corollary}
\newtheorem{proposition}{Proposition}
\newtheorem{lemma}{Lemma}
\newtheorem{definition}{Definition}
\newtheorem{remark}{Remark}
\newcommand{\TwoOne}[2]
{\begin{bmatrix}
{#1} \\
{#2}
\end{bmatrix}
}
\newcommand{\TwoTwo}[4]
{\begin{bmatrix}
{#1} & {#2} \\
{#3} & {#4}
\end{bmatrix}
}
\newcommand{\STwoOne}[2]
{\left[\begin{smallmatrix}
{#1} \\
{#2}
\end{smallmatrix}\right]
}
\newcommand{\STwoTwo}[4]
{\left[\begin{smallmatrix}
{#1} & {#2} \\
{#3} & {#4}
\end{smallmatrix}\right]
}
\newcommand{\LT}{\mathscr{L}_{\rm 2}}
\newcommand{\LTE}{\mathscr{L}_{\rm 2e}}
\newcommand{\jw}{\lambda}
\newcommand{\jj}{\mathrm{j}}
\newcommand{\bS}{\mathbb{S}}
\newcommand{\pS}{\partial\mathbb{S}}
\newcommand{\Real}{\ensuremath{\mathbb R}}
\newcommand{\Complex}{\ensuremath{\mathbb C}}
\newcommand{\Hinf}{\mathbb{H}_{\infty}}
\newcommand{\RHinf}{\mathbb{RH}_{\infty}}
\newcommand{\He}{\mathrm{He}}
\newcommand{\esssup}{\mathop{\mathrm{ess\,sup}}}
\newcommand{\cont}{\mathbb{F}}
\begin{document}

\title{Converse Theorems for \\ Integral Quadratic Constraints\thanks{ This work was supported in part by the Ministry of Science and 
Technology of Taiwan. \emph{(Corresponding author: Chung-Yao Kao)} }  }

\author{Sei~Zhen~Khong~and~Chung-Yao~Kao \thanks{
S. Z. Khong was with the Department of Electrical and Electronic Engineering, The University of Hong Kong, and now is an independent researcher.
(email: szkhongwork@gmail.com)}\thanks{
C.-Y. Kao is with the Depart. of Electrical Engineering, National Sun Yat-Sen University, Kaohsiung,
Taiwan. (e-mail: cykao@mail.ee.nsysu.edu.tw)}
} 

\maketitle

\begin{abstract}
  A collection of converse theorems for integral quadratic constraints (IQCs) is established for linear time-invariant systems. It is demonstrated
  that when a system interconnected in feedback with an arbitrary system satisfying an IQC is stable, then the given system must necessarily satisfy
  the complementary IQC. These theorems are specialized to derive multiple versions of converse passivity results. They cover standard notions of
  strict passivity as well as passivity indices that characterize the tradeoffs between passivity surplus and deficit. Converse frequency-weighted
  small-gain and passivity theorems are also established.
\end{abstract}

\emph{keywords}:  
  robust stability, integral quadratic constraints, passivity, small-gain

\section{Introduction}

Integral quadratic constraints (IQCs) are a well-established tool for robustness analysis of feedback interconnected systems from the 
input-output perspective~\cite{MJKR10}, and stand as a parallel to the state-space methods based on dissipativity~\cite{Wil72, LBEM13, AMP16}. 
The IQC analysis, as it was first introduced in~\cite{MegRan97}, provides a sufficient condition under which robust closed-loop stability of
nonlinear systems can be certified. It generalizes the standard small-gain and passivity results, besides allowing the use of dynamical 
multipliers to reduce conservatism. Despite its immense versatility, necessity of the IQC condition has rarely been studied in the literature. 
The objective of this paper is to establish certain converse IQC results in the linear time-invariant (LTI) setting, so as to further 
substantiate and promote the utility of IQCs in robust stability analysis.

The converse IQC results in this paper are concerned with uncertainties described by IQCs. Specifically, it is shown in 
Section~\ref{sec: uncertainty}
that if a feedback interconnection of a given system and any uncertain system satisfying an IQC is (robustly) stable, then the given system 
must satisfy the complementary IQC. The proof relies on the multiplier admitting a specific $J$-spectral factorization~\cite{GGLD90} and the
construction of a destablizing open-loop component in the well-known small-gain theorem~\cite{ZDG96}. In effect, the results demonstrate that 
IQC analysis is not conservative if the feedback interconnection is required to be robust against all the uncertainties as characterized by 
a specific IQC.

By specializing the IQC results to particular forms of multipliers, various versions of converse passivity theorems are derived in
Section~\ref{sec: passive}. They cover both input and output strict passivity~\cite{Sch17}, as well as compensation for the lack of 
passivity in one subcomponent with excess passivity in another, as elegantly quantified by the notion of passivity 
indices~\cite{Vid81,BaoLee07,KMXGA14,Sch17}. While
converse passivity theorems have been investigated in the time-varying setting in~\cite{KhoSch18}, they cannot be used to recover the 
LTI results in this paper. In particular, the set of (nonlinear time-varying) uncertainties in~\cite{KhoSch18} is larger than that considered 
in this paper, which is taken to be LTI. This gives rise to different ramifications in the sufficiency and necessity proofs of the results. Furthermore, unlike the necessity
proofs in~\cite{KhoSch18}, which rely on the S-procedure lossless theorem~\cite{MegTre93}, the ones in this paper are constructive. It is also
noteworthy that the single-input-single-output version of the converse passivity theorems in this paper has been considered in~\cite{colgate-hogan}
using arguments from the Nyquist stability theory. The latter paper is motivated by applications in robotics, as is further elaborated
in~\cite{Str15}. Specifically, in order to guarantee the stability of a controlled robot interacting with a passive but otherwise unknown environment,
the converse passivity theorem dictates that the robot must exhibit some form of strict passivity as seen from its interaction ports.

In Section~\ref{sec: inf_dim}, we establish a generalization of a converse IQC result to infinite-dimensional multipliers. This is 
subsequently employed to prove converse frequency-weighted small-gain and passivity theorems. They naturally extend the standard small-gain 
and passivity results through the use of frequency weights so as to reduce conservatism in robustness analysis.

The results presented in Section~\ref{sec: passive} can be proven via a path that is technically more direct and of a similar
spirit~\cite{KKS_CDC19}.
The approach adopted in~\cite{KKS_CDC19} focuses only on obtaining the converse passivity results, while the theorems presented 
in Sections~\ref{sec: uncertainty} to~\ref{sec: inf_dim} are applicable in a much broader range, where uncertain LTI systems are
characterized by general quadratic forms that may even be defined by infinite-dimensional multipliers.

Concluding remarks are provided and several future research directions discussed in Section~\ref{sec: conclusion}. The next section presents the
notation and mathematical preliminaries used throughout the paper.

\section{Notation and Preliminaries}

The results described in this paper hold in both the continuous-time (CT) and 
discrete-time (DT) domains. Thus, notation is selected to facilitate
the development that respects this fact. 

$\Real$~($\Complex$), $\Real^n$~($\Complex^n$), $\Real^{n\times m}$~($\Complex^{n\times m}$) 
denote the sets of real (complex)
numbers, $n$-dimensional real (complex) vectors, and $n\times m$ real (complex)
matrices, respectively. Let the extended real set
$\bar{\mathbb{R}}:=\Real\cup\{\pm\infty\}$ and the nonnegative orthant
of $\Real^n$ be denoted by $\Real^n_+$. The sets of integers and non-negative integers
are denoted as $\mathbb{Z}$ and $\mathbb{Z}_+$, respectively. The so-called 
``stability region'' is denoted by $\bS$, which represents the open left-half of the complex plane
for the CT case, and the open unit disk for the DT case. The boundary of $\bS$ (i.e., 
``stability boundary'') is denoted by $\pS$, which is the imaginary axis for the CT case and 
the unit circle for the DT case. The ``instability region'' is the complement of $\bS$, 
denoted as $\bS^c$. 

Given a matrix $M$, the transpose and conjugate transpose are denoted by $M^T$ and $M^*$, respectively.  The maximum singular value of $M$ is denoted
by $\bar{\sigma}(M)$.  For a square matrix $M$, the Hermitian part of $M$ (scaled by a factor of 2) is denoted by $\He(M):=M+M^*$. The notation $M>0$
($M \ge 0$) means that the matrix $M$ is positive definite (positive semi-definite).  The $n$-dimensional identity matrix and $n\times m$ zero matrix
are denoted by $I_n$ and $\mathbf{0}_{n \times m}$, respectively. The subscripts of these matrices are dropped when their dimensions are clear from
the context.

We use $\LT^n$ to denote the space of $\Real^n$-valued, CT square-integrable
functions on $\Real_+$, or DT square-summable functions on $\mathbb{Z}_+$, 
with the usual norm and inner
product denoted by $\|\cdot\|_{\LT}$ and
$\langle\cdot,\cdot\rangle_{\LT}$, respectively.  The superscript 
is dropped when the dimension is evident from the
context. The extended $\LT^n$ space is denoted as $\LTE^n$. This
consists of functions $f$ that satisfy $P_T f\in\LT^n$, for all $T>0$,
where $P_T$ denotes the truncation operator defined as:
\[
(P_Tf)(t) = \begin{cases}
f(t) & \text{for } t \le T \\
0 & \text{otherwise}
\end{cases}.
\]

Let $G:\LT\rightarrow\LT$ be a linear operator. $G$ is said to be causal
if $P_T G P_T - P_T G = 0$ for all $T>0$. The induced norm of $G$ is defined to be
\[
\|G\|=\sup_{u\in\LT, u\not=0}\frac{\|Gu\|_{\LT}}{\|u\|_{\LT}}. 
\]
$G$ is said to be bounded if $\|G\| \le \gamma$ for some $\gamma > 0$. $G$ is said to be ``stable'' if $G$ is causal and bounded.  The adjoint of $G$
is denoted by $G^*$ and $G$ is said to be self-adjoint if $G=G^*$, in which case the notation $G\geq 0$ means $\langle u, G u \rangle_{\LT} \geq 0$
for all $u\in\LT$, and $G> 0$ means there exists $\epsilon >0$ such that $\langle u, G u \rangle_{\LT} \geq \epsilon\|u\|^2_{\LT}$ for all
$u\in\LT$. Finally, $G\le (<) 0$ means $-G\ge (>) 0$.

When $G$ commutes with the forward shift operator, it can be represented in the frequency domain as multiplication by a transfer function matrix,
which is denoted by $\hat{G}:\lambda\mapsto\hat{G}(\lambda)$. In this case, $G$ is called linear-time-invariant (LTI). It is well-known that when an
LTI system $G$ is stable, $\hat{G}$ is analytic and bounded in $\bS^c$, and 
\[
\|G\|= \|\hat{G}\|_\infty := \esssup_{\jw\in\pS}\bar{\sigma}(\hat{G}(\jw)). 
\]
The space of all such $\hat{G}$ is denoted by the symbol $\Hinf$. It is also well known that when $G$ is finite-dimensional LTI with a state-space
realization $(A,B,C,D)$, $\hat{G}(\jw)= C(\jw I-A)^{-1}B+D$, which belongs to the real rational subspace of $\Hinf$, denoted by $\RHinf$. When the
dimensions of $G$ are of significance, we write $\hat{G}\in\RHinf^{n\times m}$ to emphasize that $G$ has $m$ inputs and $n$ outputs. Let
$\cont$ denote the space of continuous functions on $\pS$. It is well known that any transfer function matrix $\hat{X} \in \Hinf \cap \cont$
satisfying $\hat{X}(\lambda^*)^T = \hat{X}(\lambda)^*$ can be approximated arbitrarily closely in $\|\cdot\|_\infty$ by elements in
$\RHinf$~\cite[Lemma A.6.11]{CurZwa95}.

A stable LTI system $G$ is called \emph{passive} if $\langle u,Gu\rangle_{\LT} \ge 0$ for any $u\in\LT$. 
It is called \emph{input strictly passive} if there exists $\epsilon>0$ such that 
$\langle u,Gu\rangle_{\LT} \ge \epsilon\|u\|_{\LT}^2$ for any $u\in\LT$, and 
\emph{output strictly passive} if there exists $\epsilon>0$ such that 
$\langle u,Gu\rangle_{\LT} \ge \epsilon\|Gu\|_{\LT}^2$ for any $u\in\LT$.
It is well known that $G$ is passive if and only if (iff) 
\[
\He(\hat{G}(\jw))\ge 0 \quad \text{for all} \quad \jw\in\pS, 
\]
$G$ is input strictly passive iff 
\[
\He(\hat{G}(\jw)) > 0 \quad \text{for all} \quad \jw\in\pS, 
\]
and $G$ is output strictly passive iff for some $\epsilon>0$,
\[
\He(\hat{G}(\jw)) \ge \epsilon \hat{G}(\jw)^*\hat{G}(\jw) \quad \text{for all} \quad \jw\in\pS. 
\]
Note that input strict passivity implies output strict passivity,
as 
\begin{align*}
 & \He(\hat{G}(\jw)) > 0 \implies \\
 & \exists \varepsilon>0, \mbox{ s.t. } \He(\hat{G}(\jw)) \ge \varepsilon I 
 \ge \frac{\varepsilon}{\|G\|^2} \hat{G}(\jw)^*\hat{G}(\jw).
\end{align*}
Thus, if we denote the sets of all (LTI) passive systems, output strictly passive systems, and input
strictly passive systems by $\mathcal{P}$, $\mathcal{P}_O$, and $\mathcal{P}_I$, respectively, we 
have the following relation
\begin{align}
\mathcal{P}_I \subset \mathcal{P}_O \subset \mathcal{P}. \label{Psets}
\end{align}
Notice that both inclusions are \emph{strict}. To see this, we note that the zero system is output strictly passive
but not input strictly passive; any non-zero skew symmetric matrix (viewed as a static system) is 
passive but not output strictly passive. 

The input passivity index of $G$ is the largest $\nu$ such that $\langle u,Gu\rangle_{\LT} \ge \nu\|u\|_{\LT}^2$ for any $u\in\LT$, or equivalently,
$\He(\hat{G}(\jw)) \geq 2\nu I$ for all $\jw\in\pS$. Evidently, $G$ is input strictly passive when $\nu > 0$. The output passivity
index of $G$ is the largest $\rho$ such that $\langle u,Gu\rangle_{\LT} \ge \rho\|Gu\|_{\LT}^2$ for any $u\in\LT$, or equivalently,
$\He(\hat{G}(\jw)) \ge 2\rho \hat{G}(\jw)^*\hat{G}(\jw)$ for all $\jw\in\pS$. Evidently, $G$ is output strictly passive when
$\rho > 0$. For more details on passivity indices, the reader is referred to~\cite{BaoLee07,KMXGA14}.

\section{Main Converse Results on IQCs} \label{sec: uncertainty}

\begin{figure}[ht]
  \centering
  \includegraphics[scale=0.55]{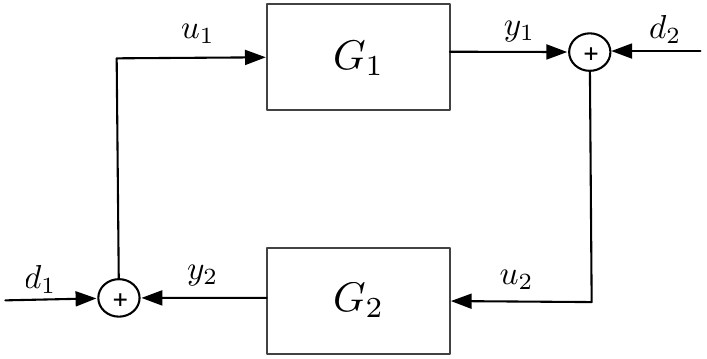}
  \caption{Positive feedback interconnection of $G_1$ and $G_2$.}
  \label{fig:1}
\end{figure}

Consider the
feedback interconnection of LTI causal systems $G_1$ and $G_2$ mapping $\LTE$ to $\LTE$, 
as illustrated in Figure~\ref{fig:1}. Algebraically, we have
\begin{align} \label{eq:FB}
\left\{\begin{array}{l}
u_2 = y_1 + d_2  \\
u_1 = y_2 + d_1 
\end{array}\right.
\quad
\left\{\begin{array}{l}
y_1 = G_1u_1 \\
y_2 = G_2u_2 
\end{array}.\right.
\end{align}
In the following, we denote the feedback interconnection of $G_1$ and $G_2$ by $[G_1, G_2]$.

\begin{definition} \label{def:FB} $[G_1, G_2]$ is said to be \emph{well-posed} if the map
  $(\STwoOne{u_1}{y_1}, \STwoOne{y_2}{u_2}) \mapsto \STwoOne{d_1}{d_2}$ defined by \eqref{eq:FB} 
  has a causal inverse on $\LTE$. It is \emph{stable} if it is well-posed and the inverse 
  is bounded.
\end{definition}

\begin{remark}
\label{rmk:1}
Note that when $[G_1,G_2]$ is well-posed, the map $\STwoOne{d_1}{d_2}\mapsto\STwoOne{y_1}{y_2}$ can 
be expressed as 
\begin{align*}
&\begin{bmatrix}G_1 & 0 \\ 0 & G_2\end{bmatrix} \begin{bmatrix}I & -G_2 \\ -G_1 & I \end{bmatrix}^{-1}
=\\
&\begin{bmatrix} G_1(I-G_2G_1)^{-1} & G_1(I-G_2G_1)^{-1}G_2 \\ (I-G_2G_1)^{-1}-I & (I-G_2G_1)^{-1}G_2 \end{bmatrix}
\end{align*}
Suppose $G_1$ and $G_2$ are both stable. Under this assumption, the above identity implies that
$[G_1,G_2]$ is stable if and only if $(I-\hat{G}_2\hat{G}_1)^{-1}\in\Hinf$. It can be shown that 
$(I-\hat{G}_2\hat{G}_1)^{-1}\in\Hinf$ if and only if $\det(I-\hat{G}_2(\jw)\hat{G}_1(\jw))\not=0$  for all 
$\jw\in\bS^c$. Moreover, by definition, stability of $[G_1,G_2]$ implies
that the maps $\STwoOne{d_1}{d_2}\mapsto\STwoOne{y_1}{y_2}$ and $\STwoOne{d_1}{d_2}\mapsto\STwoOne{u_1}{u_2}$
have finite gains. 
\end{remark}

Henceforth we also use $[G_1,G_2]$ to denote the map $\STwoOne{d_1}{d_2}\mapsto\STwoOne{y_1}{y_2}$. Suppose one of the systems, say $G_2$, is taken
from a set $\mathcal{U}$. We define uniform feedback stability in the following.

\begin{definition} \label{def:US} 
$[G_1,G_2]$ is said to be uniformly stable over $\mathcal{U}$ if $[G_1,G_2]$ is stable for all $G_2\in\mathcal{U}$, 
and there exists $\gamma > 0$ such that 
\[
\sup_{G_2\in\mathcal{U}} \left\|[G_1,G_2]\right\|\le \gamma.
\] 
\end{definition}

Let $\Pi$ be an $(n+m)\times (n+m)$, finite-dimensional LTI, bounded self-adjoint operator, and
partition $\Pi$ into $\TwoTwo{\Pi_{11}}{\Pi_{12}}{\Pi_{12}^*}{\Pi_{22}}$, such that the dimensions
of $\Pi_{11}$ and $\Pi_{22}$ are $n\times n$ and $m\times m$, respectively. Define the following $\Pi$-weighted 
quadratic forms:
\begin{align*}
&q_{\Pi}(G):=G^*\Pi_{11}G+G^*\Pi_{12}+\Pi_{12}^*G+\Pi_{22} \\ 
&q_{\Pi}^c(H):=\Pi_{11}+H^*\Pi_{12}^*+\Pi_{12}H+H^*\Pi_{22}H.
\end{align*}
Lastly, with $q_{\Pi}$ and $q_{\Pi}^c$, define the sets
\begin{align}
\label{IQC_sets}
\begin{split}
&\mathcal{G}_1:=\left\{G:\hat{G}\in\RHinf^{n \times m}, q_{\Pi}(G)<0 \right\} \quad \text{and} \\
&\mathcal{G}_2:=\left\{H:\hat{H}\in\RHinf^{m \times n}, q_{\Pi}^c(H)>0 \right\}
\end{split}
\end{align}
and let $\bar{\mathcal{G}}_1$ and $\bar{\mathcal{G}}_2$ be defined similarly to $\mathcal{G}_1$ and $\mathcal{G}_2$, but with ``$<$'' and ``$>$'' replaced
by ``$\le $'' and ``$\ge $'', respectively. Note that in the IQC literature~\cite{MegRan97, KLR16}, $q_{\Pi}(G)$ and $q_{\Pi}^c(H)$ are commonly
written in the following compact forms:
\begin{align*}
q_{\Pi}(G) = \TwoOne{G}{I}^*\Pi\TwoOne{G}{I} \text{ and } \; q_{\Pi}^c(H): = \TwoOne{I}{H}^*\Pi\TwoOne{I}{H}.
\end{align*}
Moreover, $\Pi$ is often referred to as a multiplier.

The main results of this paper are established in the following three theorems, which concern necessary and sufficient
conditions for (robust) stability and uniform stability. For the remainder of this section, we assume that
systems $G_1$ and $G_2$ are such that 
$\hat{G}_1\in\RHinf^{n\times m}$ and $\hat{G}_2\in\RHinf^{m\times n}$. 

\subsection{Robust Stability}

\begin{theorem}
\label{thm:main1}
Consider the feedback interconnected system shown in Figure~\ref{fig:1} with stable subsystems $G_1$ and $G_2$, 
the multiplier $\Pi$ and the sets $\mathcal{G}_i$, $\bar{\mathcal{G}}_i$, $i=1,2$, stated in (\ref{IQC_sets}). 
Suppose $\Pi$ admits a $J$-spectral factorization
\begin{align*}
\Pi = \Psi^*J\Psi:= \TwoTwo{\psi_1}{\psi_2}{\psi_3}{\psi_4}^*\TwoTwo{I}{0}{0}{-I}\TwoTwo{\psi_1}{\psi_2}{\psi_3}{\psi_4}
\end{align*}
such that the following conditions hold:
\begin{itemize}
\item[(1.1)] $\psi_i$, $i=1,\cdots,4$ are stable, and $\psi_4^{-1}$ is stable;  
\item[(1.2)] $\psi_1-\psi_2\psi_4^{-1}\psi_3$ is injective; 
\item[(1.3)] $\Pi_{11} := \psi_1^*\psi_1 - \psi_3^*\psi_3 \ge 0$;  
\item[(1.4)] $\Pi_{22} := \psi_2^*\psi_2 - \psi_4^*\psi_4 \le 0$. 
\end{itemize}
Then the system $[G_1,G_2]$ is stable for all $G_2\in\mathcal{G}_2$ if and only if $G_1\in\bar{\mathcal{G}}_1$. 
\end{theorem}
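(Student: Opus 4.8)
The plan is to treat the two implications separately: sufficiency by a homotopy/argument-principle argument that keeps the complementary form strictly positive along the deformation, and necessity by converting the IQC into an equivalent small-gain condition through the $J$-spectral factor $\Psi$ and then invoking the destabilizing construction behind the converse of the small-gain theorem.

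For sufficiency, I would assume $G_1\in\bar{\mathcal G}_1$, fix $G_2\in\mathcal G_2$, and (by Remark~\ref{rmk:1}, since both subsystems are stable) reduce stability of $[G_1,G_2]$ to showing $\det(I-\tau\hat G_2(\jw)\hat G_1(\jw))\neq 0$ on $\bS^c$ for $\tau=1$. I would establish this for all $\tau\in[0,1]$ by deforming $G_2$ to $0$. The crucial point is that the complementary form stays strictly positive along the path: writing $q_\Pi^c(\tau G_2)=\Pi_{11}+\tau\He(\Pi_{12}\hat G_2)+\tau^2\hat G_2^*\Pi_{22}\hat G_2$, condition~(1.4) makes this operator-concave in $\tau$, so with (1.3) giving $q_\Pi^c(0)=\Pi_{11}\ge 0$ and $q_\Pi^c(G_2)>0$ at $\tau=1$, concavity yields $q_\Pi^c(\tau G_2)\ge\tau\,q_\Pi^c(G_2)>0$ for $\tau\in(0,1]$. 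A pointwise contradiction then rules out boundary zeros: if $\det(I-\tau\hat G_2\hat G_1)(\jw_0)=0$ with $\jw_0\in\pS$, the common loop signal $w=\STwoOne{\hat G_1\xi}{\xi}$ would satisfy both $w^*\Pi w\le 0$ (from $q_\Pi(G_1)\le 0$) and $w^*\Pi w>0$ (from $q_\Pi^c(\tau G_2)>0$), which is impossible. With no zeros on $\pS$ for $\tau\in(0,1]$ and $\det=1$ at $\tau=0$, a winding-number count gives no zeros in $\bS^c$, hence stability.

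For necessity I would argue the contrapositive. Suppose $G_1\notin\bar{\mathcal G}_1$, so $\xi^* q_\Pi(G_1)(\jw_0)\xi>0$ for some $\jw_0\in\pS$. Setting $\STwoOne{a}{b}=\Psi\STwoOne{\hat G_1}{I}$ and $\hat T_1:=ab^{-1}$, the factorization $\Pi=\Psi^*J\Psi$ gives $q_\Pi(G_1)=a^*a-b^*b$, and using that $\Psi$ is invertible on $\pS$ (by (1.1)–(1.2), since $\det\Psi=\det\psi_4\,\det(\psi_1-\psi_2\psi_4^{-1}\psi_3)$) one obtains $\bar\sigma(\hat T_1(\jw_0))>1$, i.e. $\|\hat T_1\|_\infty>1$. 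By the construction underlying the converse small-gain theorem~\cite{ZDG96}, there is $\hat T_2\in\RHinf^{m\times n}$ with $\|\hat T_2\|_\infty<1$ and $\det(I-\hat T_1(\jw_0)\hat T_2(\jw_0))=0$. I would then recover an uncertainty by the inverse transform, parametrising $d:=(I-\hat T_2\psi_2\psi_4^{-1})^{-1}\hat T_2(\psi_1-\psi_2\psi_4^{-1}\psi_3)$ and $\hat G_2:=\psi_4^{-1}(d-\psi_3)$. Here condition~(1.4), equivalent to $\|\psi_2\psi_4^{-1}\|_\infty\le 1$, combines with $\|\hat T_2\|_\infty<1$ through small-gain to make $(I-\hat T_2\psi_2\psi_4^{-1})^{-1}\in\RHinf$, so that $\hat G_2\in\RHinf^{m\times n}$, while $c:=\psi_1+\psi_2\hat G_2$ inherits invertibility on $\pS$ from the injectivity in~(1.2). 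Since then $\hat T_2=dc^{-1}$ with $\|\hat T_2\|_\infty<1$, the identity $q_\Pi^c(G_2)=c^*(I-\hat T_2^*\hat T_2)c>0$ places $G_2\in\mathcal G_2$, and a graph-intersection argument transfers the determinant zero back to $\det(I-\hat G_2(\jw_0)\hat G_1(\jw_0))=0$, so $[G_1,G_2]$ is not stable, contradicting stability over all of $\mathcal G_2$.

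I expect the main obstacle to be the necessity construction, and within it the verification that the destabilizing $\hat T_2$ pulls back to a bona fide element of $\mathcal G_2$: one must simultaneously guarantee that $\hat G_2$ is stable and real-rational, that the complementary IQC holds with strict sign on all of $\pS$ rather than merely at $\jw_0$, and that the boundary determinant zero is not cancelled and genuinely breaks stability. Conditions~(1.1)–(1.4) are exactly what is needed here—(1.1) and (1.2) for invertibility of $\Psi$ and of $c$, and (1.4) for the small-gain stability of the inverse transform—so the delicate part is organising them into the clean dictionary ``$G_1\in\bar{\mathcal G}_1\Leftrightarrow\|\hat T_1\|_\infty\le 1$, $G_2\in\mathcal G_2\Leftrightarrow\|\hat T_2\|_\infty<1$, and $[G_1,G_2]$ stable $\Leftrightarrow[T_1,T_2]$ stable'' and confirming that it is a faithful bijection between the two feedback problems.
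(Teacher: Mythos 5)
Your overall route for necessity coincides with the paper's: pass $(G_1,G_2)$ through the chain-scattering transform induced by $\Psi$, invoke the destabilizing construction behind the converse small-gain theorem of~\cite{ZDG96}, and pull the resulting contraction (your $\hat T_2$) back to an uncertainty. Indeed, your $\hat G_2=\psi_4^{-1}(d-\psi_3)$ simplifies to exactly the paper's choice $-\psi_4^{-1}(I-\Delta\psi_2\psi_4^{-1})^{-1}(\psi_3-\Delta\psi_1)$, and your positivity identity $q_\Pi^c(G_2)=c^*(I-\hat T_2^*\hat T_2)c>0$ is the paper's $\zeta_1^*(I-\Delta^*\Delta)\zeta_1>0$. (For sufficiency the paper simply cites the quadratic-separation results of Iwasaki--Hara and Megretski--Rantzer; your homotopy sketch is a plausible reconstruction of those cited arguments and is not where the paper's contribution lies.)

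There is, however, a genuine gap in your necessity argument: it presumes that $\hat T_1=ab^{-1}=(\hat\psi_1\hat G_1+\hat\psi_2)(\hat\psi_3\hat G_1+\hat\psi_4)^{-1}$ is a well-defined element of $\RHinf$. Invertibility of $\Psi$ on $\pS$ (your appeal to (1.1)--(1.2) via the Schur-complement determinant formula) does not deliver this: what is needed is invertibility of the single block $b=\hat\psi_3\hat G_1+\hat\psi_4$ throughout the closed instability region $\bS^c$, and nothing in the hypotheses forces that. If $b$ is singular somewhere in $\bS^c$, then $\hat T_1$ is not stable (possibly not even defined at a boundary point), the construction of a destabilizing $\Delta$ with $\|\Delta\|<1$ from~\cite[Theorem 9.1]{ZDG96} cannot be invoked, and your claimed dictionary ``$[G_1,G_2]$ stable $\Leftrightarrow[T_1,T_2]$ stable'' breaks down, precisely because the chain-scattering correspondence is faithful only under this invertibility. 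The paper therefore splits necessity into two cases. When $\hat\psi_3\hat G_1+\hat\psi_4$ is singular at some $\tilde\jw\in\bS^c$, it takes $G_2:=-\psi_4^{-1}\psi_3$ directly: then $I-\hat G_2\hat G_1=\hat\psi_4^{-1}(\hat\psi_3\hat G_1+\hat\psi_4)$ is singular at $\tilde\jw$, so $[G_1,G_2]$ is unstable, while $q_\Pi^c(G_2)=(\psi_1-\psi_2\psi_4^{-1}\psi_3)^*(\psi_1-\psi_2\psi_4^{-1}\psi_3)>0$ by the injectivity hypothesis (1.2), so $G_2\in\mathcal G_2$; only in the complementary case (where $M=\hat T_1\in\RHinf$ and $\|M\|\not\le 1$) does the small-gain machinery you describe apply. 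Your proof needs this case distinction added. A smaller, related imprecision: the paper never proves the two-sided equivalence of stability of $[G_1,G_2]$ and $[M,\Delta]$; it uses only the one-directional algebraic identity $(I-\Delta M)(\psi_3G_1+\psi_4)=(\psi_4-\Delta\psi_2)(I-G_2G_1)$ to transfer a boundary singularity of $I-\hat\Delta\hat M$ to one of $I-\hat G_2\hat G_1$, which sidesteps having to establish your ``faithful bijection'' in the first place.
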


\begin{proof}
  Sufficiency follows the well-known quadratic separation theorem,~see~\cite{IwaHar98}, and also~\cite{MegRan97} where the roles of $G_1$ and $G_2$
  are swapped.  Here we only prove necessity. Suppose to the contrapositive that $G_1\not\in\bar{\mathcal{G}}_1$. Then
\begin{align}
\begin{split}
&(\psi_1G_1+\psi_2)^*(\psi_1G_1+\psi_2) \\ 
&\hspace{1.5cm} -(\psi_3G_1+\psi_4)^*(\psi_3G_1+\psi_4)\not\le 0. 
\end{split}\label{ineq:main1} 
\end{align}
If $\hat{\psi_3}(\jw)\hat{G}_1(\jw)+\hat{\psi}_4(\jw)$ is not invertible at $\jw = \tilde{\jw} \in \bS^c$, 
then let $G_2$ be $-\psi_4^{-1}\psi_3$.
We see that $I-\hat{G}_2\hat{G}_1$ is not invertible at $\tilde{\jw}$ and hence $[G_1,G_2]$ is not stable. 
On the other hand, 
\begin{align}
q_{\Pi}^c(G_2)=(\psi_1-\psi_2\psi_4^{-1}\psi_3)^*(\psi_1-\psi_2\psi_4^{-1}\psi_3),
\label{ineq:main2}
\end{align}
and thus $q_{\Pi}^c(G_2)>0$ since $\psi_1-\psi_2\psi_4^{-1}\psi_3$ is injective. This implies $G_2\in\mathcal{G}_2$. 

If $\hat{\psi}_3(\jw)\hat{G}_1(\jw)+\hat{\psi}_4(\jw)$ is invertible in $\bS^c$, then 
\[
M:=(\psi_1G_1+\psi_2)(\psi_3G_1+\psi_4)^{-1} 
\]
is stable, and inequality (\ref{ineq:main1}) implies that $\|M\|_{\LT}\not\le 1$.  Hence by the small gain theorem, there exists stable
finite-dimensional LTI operator $\Delta$ with $\|\Delta\|_{\LT} < 1$, such that $\det(I-\hat{\Delta}(\jw)\hat{M}(\jw))=0$ for some $\jw\in\pS$. For
the CT case, a constructive proof of this result can be found in~\cite[Theorem 9.1]{ZDG96}. For the DT case, the proof is similar.

Since $\psi_2^*\psi_2 - \psi_4^*\psi_4 \le 0$, we have $\|\psi_2\psi_4^{-1}\|_{\LT}\le 1$. Therefore, by the 
small gain theorem it follows that $I-\Delta\psi_2\psi_4^{-1}$ is invertible and 
$\psi_4^{-1}(I-\Delta\psi_2\psi_4^{-1})^{-1}=(\psi_4-\Delta\psi_2)^{-1}$ is stable. 
Define 
\[
G_2:=-\psi_4^{-1}(I-\Delta\psi_2\psi_4^{-1})^{-1}(\psi_3-\Delta\psi_1). 
\]
One can readily verify that $(I-\Delta M)(\psi_3G_1+\psi_4) = (\psi_4-\Delta\psi_2)(I-G_2G_1)$. Since $\psi_3G_1+\psi_4$ and $\psi_4-\Delta\psi_2$ are
both invertible, we see that $I-\hat{\Delta}(\jw)\hat{M}(\jw)$ being singular at some $\jw\in\pS$ implies the same for
$I-\hat{G}_2(\jw)\hat{G}_1(\jw)$, and hence $[G_1,G_2]$ is not stable. The relationship among $G_1$, $G_2$, $M$, and $\Delta$
can be interpreted by a chain-scattering formalism~\cite{Kim97}; see Figure~\ref{fig: LFT_feedback} for an illustration.

Finally, we show that $G_2:=-\psi_4^{-1}(I-\Delta\psi_2\psi_4^{-1})^{-1}(\psi_3-\Delta\psi_1)$ belongs to $\mathcal{G}_2$. To see 
this, let $\zeta_1 = \psi_1+\psi_2G_2$ and $\zeta_2 = \psi_3+\psi_4G_2$. We have the following equalities:
\begin{align*}
\zeta_2 &= \psi_3 - (I-\Delta\psi_2\psi_4^{-1})^{-1}(\psi_3-\Delta\psi_1)\\
&=-(I-\Delta\psi_2\psi_4^{-1})^{-1}\Delta\psi_2\psi_4^{-1}\psi_3 \\
&\hspace{3cm} + (I-\Delta\psi_2\psi_4^{-1})^{-1}\Delta\psi_1 \\
&=\Delta(I-\psi_2\psi_4^{-1}\Delta)^{-1}(\psi_1-\psi_2\psi_4^{-1}\psi_3) \\
\text{and} \\
\zeta_1 &= \psi_1-\psi_2\psi_4^{-1}(I-\Delta\psi_2\psi_4^{-1})^{-1}(\psi_3-\Delta\psi_1)\\
&=(I+\psi_2\psi_4^{-1}(I-\Delta\psi_2\psi_4^{-1})^{-1}\Delta)\psi_1\\ 
&\hspace{3cm} -\psi_2\psi_4^{-1}(I-\Delta\psi_2\psi_4^{-1})^{-1}\psi_3\\
&=(I-\psi_2\psi_4^{-1}\Delta)^{-1}(\psi_1-\psi_2\psi_4^{-1}\psi_3).
\end{align*}
Thus we have $\zeta_2 = \Delta\zeta_1$ and 
\[
q_{\Pi}^c(G_2)=\zeta_1^*(I-\Delta^*\Delta)\zeta_1>0,
\] 
as required.
\end{proof}

\begin{figure}[ht]
  \centering
  \includegraphics[scale=0.55]{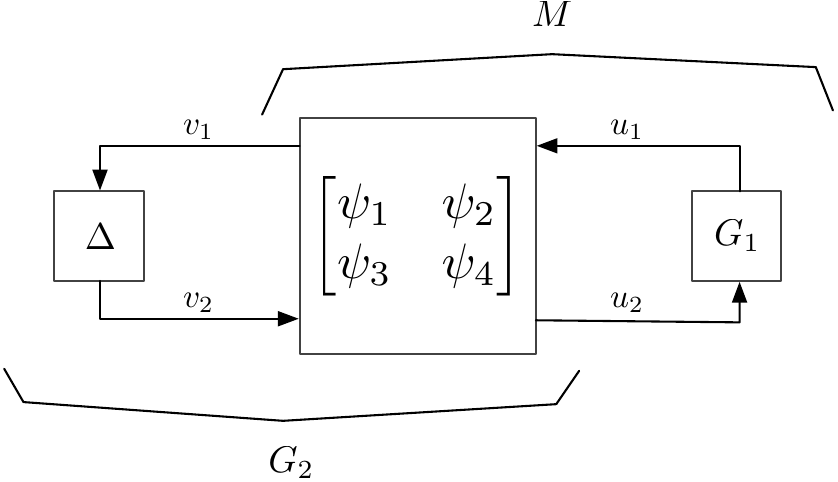}
  \caption{The chain-scattering transformation utilized in the proof of Theorem~\ref{thm:main1}. Note that the block in the middle 
    relates the signals $v_1$, $v_2$, $u_1$, and $u_2$ by the equations $v_1=\psi_1u_1+\psi_2u_2$ and $v_2=\psi_3u_1+\psi_4u_2$. 
    Thus, setting $v_1$ and $u_1$ to be $Mv_2$ and $G_1u_2$, respectively, the aforementioned equalities lead to an expression of $M$
    in terms of $G_1$ and $\psi_i$'s. Similarly, setting $v_2$ and $u_2$ to be $\Delta v_1$ and $G_2u_1$, we obtains an expression for $G_2$.}
  \label{fig: LFT_feedback}
\end{figure}

We can also prove the following theorem, where $\mathcal{G}_2$ is replaced by $\bar{\mathcal{G}}_2$ and $\bar{\mathcal{G}}_1$
by $\mathcal{G}_1$. Note that the conditions required for the $J$-spectral factors are slightly different in this case. 

\begin{theorem}
\label{thm:main2}
Consider the feedback interconnected system shown in Figure~\ref{fig:1} with stable subsystems $G_1$ and $G_2$, 
the multiplier $\Pi$ and the sets $\mathcal{G}_i$, $\bar{\mathcal{G}}_i$, $i=1,2$, stated in (\ref{IQC_sets}). 
Suppose $\Pi$ admits a $J$-spectral factorization
\begin{align*}
\Pi = \Psi^*J\Psi:= \TwoTwo{\psi_1}{\psi_2}{\psi_3}{\psi_4}^*\TwoTwo{I}{0}{0}{-I}\TwoTwo{\psi_1}{\psi_2}{\psi_3}{\psi_4}
\end{align*}
such that the following conditions hold:
\begin{itemize}
\item[(2.1)] $\psi_i$, $i=1,\cdots,4$ are stable, and $\psi_4^{-1}$ is stable;  
\item[(2.2)] $\Pi_{11} := \psi_1^*\psi_1 - \psi_3^*\psi_3 \ge 0$;  
\item[(2.3)] $\Pi_{22} := \psi_2^*\psi_2 - \psi_4^*\psi_4 < 0$. 
\end{itemize}
Then the system $[G_1,G_2]$ is stable for all $G_2\in\bar{\mathcal{G}}_2$ if and only if $G_1\in\mathcal{G}_1$. 
\end{theorem}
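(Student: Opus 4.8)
The plan is to follow the proof of Theorem~\ref{thm:main1} closely, redistributing the ``strictness budget'' from the destabilizing perturbation $\Delta$ to the multiplier block $\Pi_{22}$. Sufficiency again follows from the quadratic separation theorem~\cite{IwaHar98}, with the roles reversed: the strict inequality $q_{\Pi}(G_1)<0$ now supplies the separation margin against any $G_2$ satisfying $q_{\Pi}^c(G_2)\ge0$, certifying stability of $[G_1,G_2]$. For necessity I would argue by contrapositive, assuming $G_1\notin\mathcal{G}_1$, i.e.\ $q_{\Pi}(G_1)\not<0$. Factoring through $\Psi$ exactly as before, this yields
\begin{align*}
(\psi_1 G_1+\psi_2)^*(\psi_1 G_1+\psi_2)-(\psi_3 G_1+\psi_4)^*(\psi_3 G_1+\psi_4)\not<0,
\end{align*}
the sole change from \eqref{ineq:main1} being ``$\not<$'' in place of ``$\not\le$''.

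I would then split into the same two cases. If $\psi_3 G_1+\psi_4$ fails to be invertible at some $\tilde\jw\in\bS^c$, I take $G_2=-\psi_4^{-1}\psi_3$, so that $I-\hat{G}_2\hat{G}_1=\psi_4^{-1}(\psi_3 G_1+\psi_4)$ is singular at $\tilde\jw$ and $[G_1,G_2]$ is unstable; here $q_{\Pi}^c(G_2)=(\psi_1-\psi_2\psi_4^{-1}\psi_3)^*(\psi_1-\psi_2\psi_4^{-1}\psi_3)\ge0$ holds automatically as a square, placing $G_2\in\bar{\mathcal{G}}_2$. This is precisely where the injectivity hypothesis (1.2) can be dropped: membership in the \emph{non-strict} set $\bar{\mathcal{G}}_2$ no longer requires strict positivity of the square. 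If instead $\psi_3 G_1+\psi_4$ is invertible throughout $\bS^c$, then $M:=(\psi_1 G_1+\psi_2)(\psi_3 G_1+\psi_4)^{-1}$ is stable, and factoring out the invertible $\psi_3 G_1+\psi_4$ from the displayed inequality gives $M^*M-I\not<0$, i.e.\ $\|M\|_\infty\ge1$ (rather than $>1$).

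The decisive step is the boundary version of the constructive small-gain theorem. Since $M\in\RHinf$, the map $\jw\mapsto\bar\sigma(\hat{M}(\jw))$ is continuous on the compact set $\pS$, so the value $\bar\sigma(\hat{M}(\tilde\jw))=\|M\|_\infty\ge1$ is attained at some $\tilde\jw\in\pS$; a rank-one interpolation at this single frequency then produces a stable finite-dimensional LTI $\Delta$ with $\|\Delta\|_\infty\le1$ and $\det(I-\hat\Delta(\tilde\jw)\hat{M}(\tilde\jw))=0$. With this $\Delta$ in hand, the remaining constructions of Theorem~\ref{thm:main1} carry over verbatim, but now the strict condition (2.3) $\Pi_{22}<0$ earns its keep: it gives $\|\psi_2\psi_4^{-1}\|_\infty<1$ strictly, so the weaker bound $\|\Delta\|_\infty\le1$ still yields $\|\Delta\psi_2\psi_4^{-1}\|_\infty<1$, whence $I-\Delta\psi_2\psi_4^{-1}$ is invertible and $(\psi_4-\Delta\psi_2)^{-1}$ is stable. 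Defining $G_2:=-\psi_4^{-1}(I-\Delta\psi_2\psi_4^{-1})^{-1}(\psi_3-\Delta\psi_1)$ and invoking the identity $(I-\Delta M)(\psi_3 G_1+\psi_4)=(\psi_4-\Delta\psi_2)(I-G_2 G_1)$ shows $I-\hat{G}_2\hat{G}_1$ is singular at $\tilde\jw\in\pS$, so $[G_1,G_2]$ is unstable; and the same $\zeta_1,\zeta_2$ computation gives $q_{\Pi}^c(G_2)=\zeta_1^*(I-\Delta^*\Delta)\zeta_1\ge0$, now only from $\|\Delta\|_\infty\le1$, so $G_2\in\bar{\mathcal{G}}_2$.

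I expect the main obstacle to be exactly this boundary case $\|M\|_\infty=1$. The constructive result cited in~\cite[Theorem 9.1]{ZDG96} is phrased for $\|M\|_\infty>1$ with a \emph{strictly} contractive destabilizing $\Delta$, so the work lies in verifying that the single-frequency interpolation on $\pS$ still delivers a stable $\Delta$ with $\|\Delta\|_\infty\le1$ whose associated loop is singular \emph{on the boundary} $\pS$, and in checking that the strictness of (2.3) is precisely what keeps every inverse appearing in the chain-scattering construction well-defined under the relaxed bound $\|\Delta\|_\infty\le1$.
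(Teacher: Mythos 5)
Your proposal is correct and follows essentially the same route as the paper: the paper's own proof of Theorem~\ref{thm:main2} simply reuses the argument of Theorem~\ref{thm:main1} with exactly the two modifications you identify, namely dropping the injectivity hypothesis because only $q_{\Pi}^c(G_2)\ge 0$ is needed when $\psi_3G_1+\psi_4$ fails to be invertible, and invoking the strict condition $\Pi_{22}<0$ so that $\|\psi_2\psi_4^{-1}\|<1$ compensates for the weakened bound $\|\Delta\|\le 1$ in the chain-scattering construction. The only caveat is your compactness claim: in continuous time $\pS$ is the (non-compact) imaginary axis, so the peak of $\bar{\sigma}(\hat{M}(\jw))$ may only be attained at $\jw=\infty$, a case handled in the construction of \cite[Theorem 9.1]{ZDG96} by a constant $\Delta$; this is a technicality at the same level of rigor as the paper's own appeal to that result.
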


\begin{proof}
  Sufficiency follows from results in~\cite{MegRan97,IwaHar98}. Necessity can be proven by exactly the same arguments as those in the proof of
  Theorem~\ref{thm:main1}, except for the following minor differences. First, in the case where $\psi_3G_1+\psi_4$ is not boundedly invertible, we
  only need to show $G_2:=-\psi_4^{-1}\psi_3$ satisfies a non-strict quadratic inequality; i.e., to show that the quantity $q_{\Pi}^c(G_2)$ given
  in~\eqref{ineq:main2} is positive semi-definite. Hence the condition that $\psi_1-\psi_2\psi_4^{-1}\psi_3$ is injective is not required.

On the other hand, the $\Delta$ operator now satisfies $\|\Delta\|_{\LT}\le 1$ and hence we require 
$\Pi_{22}:=\psi_2^*\psi_2-\psi^*_4\psi_4 < 0$, in order to guarantee that 
$(\psi_4-\Delta\psi_2)^{-1}$ exists and is stable, which in turn enables the subsequent arguments. 
\end{proof}

\begin{remark}
  We note that the condition about the negative-definiteness of $\Pi_{22}$ is crucial (cf. (2.3) of Theorem~\ref{thm:main2}).  The condition allows
  the zero system to reside in the set $\mathcal{G}_1$. If this were not the case, the necessity part of the theorem would be invalid, as $[0,G_2]$ is stable
  for any stable $G_2$ (and all systems in $\bar{\mathcal{G}}_2$ are stable).
\end{remark}

\subsection{Robust Uniform Stability}

One can relax the strict negativeness on $\Pi_{22}$ by enforcing uniformity on closed-loop stability. Specifically, we
 have the following theorem. 

\begin{theorem}
\label{thm:main3}
Consider the feedback interconnected system shown in Figure~\ref{fig:1} with stable subsystems $G_1$ and $G_2$, 
the multiplier $\Pi$ and the sets $\mathcal{G}_i$, $\bar{\mathcal{G}}_i$, $i=1,2$, stated in (\ref{IQC_sets}). 
Suppose $\Pi$ admits a $J$-spectral factorization
\begin{align*}
\Pi = \Psi^*J\Psi:= \TwoTwo{\psi_1}{\psi_2}{\psi_3}{\psi_4}^*\TwoTwo{I}{0}{0}{-I}\TwoTwo{\psi_1}{\psi_2}{\psi_3}{\psi_4}
\end{align*}
such that the following conditions hold:
\begin{itemize}
\item[(3.1)] $\psi_i$, $i=1,\cdots,4$ are stable, and $\psi_4^{-1}$ is stable;  
\item[(3.2)] $\Pi_{11} := \psi_1^*\psi_1 - \psi_3^*\psi_3 \ge 0$;  
\item[(3.3)] $\Pi_{22} := \psi_2^*\psi_2 - \psi_4^*\psi_4 \le 0$. 
\end{itemize}
Then the system $[G_1,G_2]$ is \emph{uniformly} stable for any $G_2\in\bar{\mathcal{G}}_2$ if and only if $G_1\in\mathcal{G}_1$. 
\end{theorem}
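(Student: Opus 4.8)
The plan is to reuse the machinery of Theorems~\ref{thm:main1} and~\ref{thm:main2} verbatim, isolating the single new phenomenon created by relaxing the strict condition (2.3) to the non-strict (3.3): a boundary case in which no \emph{single} admissible $G_2$ destabilizes the loop, yet uniform boundedness fails in the limit. For sufficiency I would first note that $G_1\in\mathcal{G}_1$ (i.e.\ $q_\Pi(G_1)<0$) together with $q_\Pi^c(G_2)\ge 0$ for $G_2\in\bar{\mathcal{G}}_2$ yields stability by the quadratic separation argument of~\cite{MegRan97,IwaHar98}, exactly as in the sufficiency direction of Theorem~\ref{thm:main2}; that argument places the strict inequality on the $G_1$ side and does not exploit the strictness of $\Pi_{22}$, so it survives under (3.3). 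For the \emph{uniform} bound I would extract $\epsilon>0$ with $q_\Pi(G_1)\le-\epsilon I$ and, writing the loop signals via $u_1=G_2u_2+d_1$ and $u_2=G_1u_1+d_2$, evaluate $q_\Pi(G_1)$ on $u_1$ and $q_\Pi^c(G_2)$ on $u_2$ (on truncations $P_Tu_i$, letting $T\to\infty$). Subtracting the two quadratic forms, the $z$-quadratic part with $z=\STwoOne{u_2}{u_1}$ cancels, leaving $\epsilon\|u_1\|^2$ dominated by cross terms linear in $\|d\|$ and $\|z\|$; since $u_2=G_1u_1+d_2$ gives $\|z\|\le(1+\|G_1\|)\|u_1\|+\|d\|$, solving the resulting scalar quadratic inequality yields $\|u_1\|\le C\|d\|$ with $C$ depending only on $\epsilon$, $\|\Pi\|$, $\|G_1\|$, and \emph{not} on $G_2$. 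This is the source of uniformity.

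For necessity I would argue the contrapositive: assuming $G_1\notin\mathcal{G}_1$, i.e.\ $q_\Pi(G_1)\not<0$, I construct admissible systems witnessing the failure of uniform stability. As before I set $A=\psi_1G_1+\psi_2$ and $B=\psi_3G_1+\psi_4$, so that $q_\Pi(G_1)=A^*A-B^*B$. If $B$ is not boundedly invertible on $\bS^c$, I take $G_2=-\psi_4^{-1}\psi_3$, which lies in $\bar{\mathcal{G}}_2$ by~\eqref{ineq:main2} (only $q_\Pi^c(G_2)\ge 0$ is needed, so the injectivity hypothesis (1.2) can be dropped) and makes $I-\hat G_2\hat G_1$ singular in $\bS^c$; the loop is already unstable, a fortiori not uniformly stable. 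Otherwise $M=AB^{-1}$ is stable and $q_\Pi(G_1)=B^*(M^*M-I)B$ with $B$ invertible, whence $\|M\|_\infty\ge 1$. When $\|M\|_\infty>1$, the construction of~\cite[Theorem 9.1]{ZDG96} supplies a stable $\Delta$ with $\|\Delta\|_\infty<1$ and $\det(I-\hat\Delta(\jw)\hat M(\jw))=0$ at some $\jw\in\pS$; the associated $G_2$ from the proof of Theorem~\ref{thm:main1} then destabilizes the loop, and here $\|\Delta\|_\infty<1$ keeps $(\psi_4-\Delta\psi_2)^{-1}$ stable despite (3.3) being only non-strict. Again a single $G_2$ breaks stability.

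The crux is the boundary case $\|M\|_\infty=1$ (equivalently $G_1\in\bar{\mathcal{G}}_1\setminus\mathcal{G}_1$), where every admissible $G_2$ may leave the loop stable. Here I would invoke the equality version of~\cite[Theorem 9.1]{ZDG96}: since $M$ is rational, $\bar\sigma(\hat M(\cdot))$ attains its maximum $1$ at some $\tilde\jw\in\pS$, and one builds a stable $\Delta$ with $\|\Delta\|_\infty=1$ and $\det(I-\hat\Delta(\tilde\jw)\hat M(\tilde\jw))=0$. I then set $\Delta_k=(1-\tfrac1k)\Delta$ and $G_2^{(k)}=-\psi_4^{-1}(I-\Delta_k\psi_2\psi_4^{-1})^{-1}(\psi_3-\Delta_k\psi_1)$. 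Because (3.3) gives $\|\psi_2\psi_4^{-1}\|_\infty\le 1$ and $\|\Delta_k\|_\infty<1$, the product $\|\Delta_k\psi_2\psi_4^{-1}\|_\infty<1$, so $(\psi_4-\Delta_k\psi_2)^{-1}$ is stable and $G_2^{(k)}$ is a bona fide stable system; the computation following~\eqref{ineq:main2} gives $q_\Pi^c(G_2^{(k)})=\zeta_1^*(I-\Delta_k^*\Delta_k)\zeta_1\ge 0$, so $G_2^{(k)}\in\bar{\mathcal{G}}_2$ and each loop is stable. However, from $(I-\Delta_kM)(\psi_3G_1+\psi_4)=(\psi_4-\Delta_k\psi_2)(I-G_2^{(k)}G_1)$ one obtains $(I-G_2^{(k)}G_1)^{-1}=B^{-1}(I-\Delta_kM)^{-1}(\psi_4-\Delta_k\psi_2)$, and at $\tilde\jw$ the middle factor $(I-\hat\Delta_k(\tilde\jw)\hat M(\tilde\jw))^{-1}$ has a singular value at least $k$, since $\hat\Delta_k(\tilde\jw)\hat M(\tilde\jw)=(1-\tfrac1k)\hat\Delta(\tilde\jw)\hat M(\tilde\jw)$ carries the eigenvalue $1-\tfrac1k$. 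As the flanking factors are invertible uniformly in $k$, the closed-loop map $d_1\mapsto u_1=(I-G_2^{(k)}G_1)^{-1}d_1$ has gain growing like $k$, so $\sup_k\|[G_1,G_2^{(k)}]\|=\infty$ and uniform stability fails.

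I expect the main obstacle to be precisely this boundary construction: securing a stable, norm-one $\Delta$ that places an eigenvalue of $\hat\Delta(\tilde\jw)\hat M(\tilde\jw)$ exactly at $1$ (the equality case of the small-gain lemma, where rationality of $M$ is what guarantees the supremum is attained at a point of $\pS$), and then verifying rigorously that the perturbed family $\Delta_k$ forces the closed-loop gain to diverge while keeping each $G_2^{(k)}$ inside $\bar{\mathcal{G}}_2$ and each loop stable. Everything else is a direct transcription of the algebra already carried out for Theorem~\ref{thm:main1}.
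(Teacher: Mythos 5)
Your overall architecture is the paper's: the same dichotomy on whether $B:=\psi_3G_1+\psi_4$ is boundedly invertible, the same choice $G_2=-\psi_4^{-1}\psi_3$ in the singular case, and the same family built from a scaled contraction $\Delta_k=(1-\tfrac1k)\Delta$ (the paper's $\Delta_\rho=\rho\Delta$, $\rho\to1$). Your extra sub-case split between $\|M\|_\infty>1$ (single destabilizing $G_2$, as in Theorem~\ref{thm:main1}) and $\|M\|_\infty=1$, and your explicit a priori bound for sufficiency (the paper simply cites~\cite{MegRan97,IwaHar98,Kho11}), are harmless variations. The gap is exactly at the step you yourself flagged as the crux, and it is fatal as written: you track the closed-loop block $d_1\mapsto u_1=(I-G_2^{(k)}G_1)^{-1}d_1$ and claim that in $(I-G_2^{(k)}G_1)^{-1}=B^{-1}(I-\Delta_kM)^{-1}(\psi_4-\Delta_k\psi_2)$ the flanking factors are ``invertible uniformly in $k$''. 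They are not: $\psi_4-\Delta_k\psi_2=(I-\Delta_k\psi_2\psi_4^{-1})\psi_4$, and since (3.3) makes $\psi_2\psi_4^{-1}$ only a possibly non-strict contraction, the smallest singular value of $I-\hat\Delta_k\hat\psi_2\hat\psi_4^{-1}$ at the critical frequency can be of order $1/k$, exactly cancelling the factor $k$ you extract from the middle term.

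A concrete instance kills the argument outright. Take $G_1=0$; this lies outside $\mathcal{G}_1$ whenever $q_\Pi(0)=\Pi_{22}\not<0$, which (3.3) permits --- e.g.\ $n=m=1$, $\psi_3=0$, $\psi_4=1$, $\psi_2$ any stable all-pass function (so $\Pi_{22}=0$), $\psi_1$ stable with $\Pi_{11}=\psi_1^*\psi_1\ge 0$; all of (3.1)--(3.3) hold. Then $M=\psi_2\psi_4^{-1}$, so you are in your boundary case $\|M\|_\infty=1$, and $(I-G_2G_1)^{-1}=I$ identically for \emph{every} $G_2$: your identity collapses to $\psi_4^{-1}(I-\Delta_kM)^{-1}(I-\Delta_kM)\psi_4=I$, the right factor cancelling the middle one exactly, and no choice of $\Delta$ can make your tracked gain grow. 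Uniform stability does fail here, but for a reason your block cannot see: $\|[0,G_2]\|=\|G_2\|$ and $\bar{\mathcal{G}}_2$ contains elements of arbitrarily large norm (this is the same phenomenon as in the remark following Theorem~\ref{thm:main2} and in Proposition~\ref{prop7}). This is precisely why the paper tracks the $d_2\mapsto y_2$ block instead, namely $(I-G_{2\rho}G_1)^{-1}G_{2\rho}=-B^{-1}(I-\Delta_\rho M)^{-1}(\psi_3-\Delta_\rho\psi_1)$: its right-hand factor $\psi_3-\Delta_\rho\psi_1$ is not of the form $(I-\Delta_\rho M)(\cdot)$ and reflects the gain of $G_{2\rho}$ itself --- in the instance above this block \emph{is} $G_{2\rho}$, whose norm diverges as $\rho\to 1$ provided the critical frequency is one where $\hat\psi_1$ does not vanish (a non-degeneracy point that even the paper leaves tacit, but which can be arranged there, whereas no repair exists for your block). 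To fix your proof, replace the map $d_1\mapsto u_1$ by a closed-loop block involving $G_2^{(k)}$ on the output side, exactly as the paper does.
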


\begin{proof}
If $G_1\in\mathcal{G}_1$, then stability of $[G_1,G_2]$ for any $G_2\in\bar{\mathcal{G}}_2$ is proven in~\cite{IwaHar98} and~\cite{MegRan97}.
Uniform stability of $[G_1,G_2]$ is inferred from the proof of stability in~\cite{MegRan97}, where the bound on the gain is shown. 
See also Lemma 4.1.2 of~\cite{Kho11} for a proof of uniform stability on a more general setting, for which the LTI setting considered here
is a special case. 

For necessity, the arguments follow similar lines as presented in the proof of Theorem~\ref{thm:main2}.  Suppose $G_1\not\in\mathcal{G}_1$ and
$\psi_3G_1+\psi_4$ is not boundedly invertible. Then by selecting $G_2$ as $-\psi_4^{-1}\psi_3\in\bar{\mathcal{G}}_2$, we have
$G_2\in\bar{\mathcal{G}}_2$ and $[G_1,G_2]$ is unstable.

On the other hand, if $\hat{\psi}_3(\jw)\hat{G}_1(\jw)+\hat{\psi}_4(\jw)$ is invertible in $\bS^c$, then
\[
M:=(\psi_1G_1+\psi_2)(\psi_3G_1+\psi_4)^{-1} 
\]
is stable, and $G_1\not\in\mathcal{G}_1$ implies that $\|M\|_{\LT}\not < 1$.  Hence by the small gain theorem, 
there exists stable LTI operator $\Delta$ with $\|\Delta\|_{\LT} \le 1$, such that $\det(I-\hat{\Delta}(\jw)\hat{M}(\jw))=0$ 
for some $\jw\in\pS$, which implies that $[M,\Delta]$ is not stable. We will now show that, by $\Delta$, it 
is possible to either destabilize $[G_1,G_2]$ by some $G_2\in\bar{\mathcal{G}}_2$, or construct a
series of $G_2$'s in $\bar{\mathcal{G}}_2$ such that the gain of $[G_1,G_2]$ becomes arbitrarily large.

Since $\psi_2^*\psi_2 - \psi_4^*\psi_4 \le 0$, we have $\|\psi_2\psi_4^{-1}\|_{\LT}\le 1$. As $\|\Delta\|_{\LT} \le 1$, 
$(I-\Delta \psi_2\psi_4^{-1})$ may not be invertible. Thus we let $\Delta_\rho:=\rho\cdot\Delta$ with $0 < \rho<1$
and 
\begin{align}
\label{G2r}
G_{2\rho}:=-\psi_4^{-1}(I-\Delta_\rho\psi_2\psi_4^{-1})^{-1}(\psi_3-\Delta_\rho\psi_1). 
\end{align}
Clearly, $G_{2\rho}$ is well-defined and stable for all $\rho < 1$, as $\|\Delta_\rho\|_{\LT} < 1$.  Now one can readily verify that
\begin{align*}
&(I-G_{2\rho}G_1)^{-1}G_{2\rho} \\
&\hspace{0.5cm} = -(\psi_3G_1+\psi_4)^{-1}(I-\Delta_\rho M)^{-1}(\psi_3-\Delta_\rho\psi_1).
\end{align*}
Hence $\|(I-G_{2\rho}G_1)^{-1}G_{2\rho}\|\rightarrow\infty$ as $\rho\rightarrow 1$, because 
$\det(I-\hat{\Delta}_\rho(\jw)\hat{M}(\jw))\rightarrow 0$ for some $\jw \in \pS$ as $\rho\rightarrow 1$. This in turn
implies $\|[G_1,G_{2\rho}]\|\rightarrow\infty$ as $\rho\rightarrow 1$. Lastly, to see that $G_{2\rho}\in\bar{\mathcal{G}}_2$,
we note that the derivation in the last paragraph of the proof of Theorem~\ref{thm:main1} remains entirely the same when
$\Delta$ is replaced by $\Delta_\rho$. Hence $q_{\Pi}^c(G_{2\rho})=\zeta_1^*(I-\rho^2\Delta^*\Delta)\zeta_1>0$
for all $\rho<1$. Thus, we conclude that $[G_1,G_2]$ is not uniformly stable over $\bar{\mathcal{G}}_2$. 
\end{proof}

\section{Converse Passivity Theorems} \label{sec: passive}

In this section, we apply Theorems~\ref{thm:main1} to~\ref{thm:main3} to derive multiple converse passivity theorems.  First, we note that, a minus
sign will be applied to system $G_1$ as in $[-G_1, G_2]$ throughout this section in order to stay in line with the negative feedback convention in the passivity
literature. In Section~\ref{sec:stab}, converse passivity theorems based on the notion of robust closed-loop stability are discussed, while theorems
related to the notion of robust \emph{uniform} stability are discussed in~\ref{sec:unistab}.

\subsection{Robust Stability}
\label{sec:stab}

The following proposition follows from Theorem~\ref{thm:main1} by taking an appropriate multiplier $\Pi$ and $\Pi$-weighted quadratic forms. 

\begin{proposition}
\label{prop1}
Consider the feedback interconnected system $[-G_1,G_2]$ such as the one shown in Figure~\ref{fig:1}, where the subsystems $G_1$ and $G_2$ are square and stable.
Then the system is stable for any input strictly passive $G_2$ if and only if $G_1$ is passive. 
\end{proposition}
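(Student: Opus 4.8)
The plan is to apply Theorem~\ref{thm:main1} with the static passivity multiplier
\[
\Pi = \TwoTwo{0}{I}{I}{0},
\]
taking the theorem's first subsystem to be $-G_1$, so that the positive-feedback convention of~\eqref{eq:FB} becomes the negative-feedback convention of the passivity literature. Since $G_1$ and $G_2$ are square and stable, so is $-G_1$, and the standing hypotheses $\widehat{-G_1}\in\RHinf^{n\times n}$ and $\hat{G}_2\in\RHinf^{n\times n}$ are met.

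First I would recast the two passivity notions as membership in the sets~\eqref{IQC_sets}. For the chosen $\Pi$ one has $\Pi_{11}=\Pi_{22}=0$ and $\Pi_{12}=\Pi_{12}^*=I$, so direct substitution gives $q_{\Pi}^c(H)=\He(H)$ and $q_{\Pi}(G)=\He(G)$. Read as operator inequalities, these identify $\mathcal{G}_2=\{H:\hat{H}\in\RHinf^{n\times n},\,\He(H)>0\}$ with exactly the set of input strictly passive systems (by definition $\langle u,Hu\rangle_{\LT}\ge\epsilon\|u\|_{\LT}^2$ is equivalent to $\He(H)>0$), while $q_{\Pi}(-G_1)=-\He(G_1)$, so that $-G_1\in\bar{\mathcal{G}}_1$, i.e.\ $q_{\Pi}(-G_1)\le 0$, is equivalent to $\He(G_1)\ge 0$, which is precisely passivity of $G_1$.

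Next I would exhibit a $J$-spectral factorization of $\Pi$ satisfying (1.1)--(1.4). The constant factor
\[
\Psi=\frac{1}{\sqrt{2}}\TwoTwo{I}{I}{-I}{I}
\]
satisfies $\Psi^*J\Psi=\Pi$, as a short computation confirms, with blocks $\psi_1=\psi_2=\psi_4=\tfrac{1}{\sqrt{2}}I$ and $\psi_3=-\tfrac{1}{\sqrt{2}}I$. All four blocks are static and hence stable, and $\psi_4^{-1}=\sqrt{2}\,I$ is stable, giving (1.1); conditions (1.3) and (1.4) hold with equality since $\Pi_{11}=\Pi_{22}=0$; and (1.2) holds because $\psi_1-\psi_2\psi_4^{-1}\psi_3=\sqrt{2}\,I$ is invertible, a fortiori injective. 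With every hypothesis of Theorem~\ref{thm:main1} verified, its conclusion reads: $[-G_1,G_2]$ is stable for all $G_2\in\mathcal{G}_2$ if and only if $-G_1\in\bar{\mathcal{G}}_1$, that is, the interconnection is stable for every input strictly passive $G_2$ if and only if $G_1$ is passive.

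I do not anticipate a genuine obstacle here, as the factorization is explicit and the conditions reduce to trivial static checks; the proposition is essentially a specialization of the main theorem. The only point demanding care is the bookkeeping of the sign absorbed into $-G_1$: one must confirm that it simultaneously converts~\eqref{eq:FB} into the negative-feedback passivity setting and flips $q_{\Pi}$ so that the non-strict inequality defining $\bar{\mathcal{G}}_1$ corresponds to passivity of $G_1$ rather than to its negation.
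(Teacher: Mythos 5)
Your proposal is correct and follows essentially the same route as the paper: both apply Theorem~\ref{thm:main1} with the static multiplier $\Pi = \TwoTwo{0}{I}{I}{0}$ to the interconnection $[-G_1,G_2]$, verify conditions (1.1)--(1.4) via an explicit constant $J$-spectral factor, and identify $\mathcal{G}_2$ with the input strictly passive systems and $-G_1\in\bar{\mathcal{G}}_1$ with passivity of $G_1$. The only (immaterial) difference is the placement of the minus sign in the factor: you take $\psi_1=\psi_2=\psi_4=\tfrac{1}{\sqrt{2}}I$, $\psi_3=-\tfrac{1}{\sqrt{2}}I$, whereas the paper takes $\psi_1=\psi_2=\psi_3=\tfrac{1}{\sqrt{2}}I$, $\psi_4=-\tfrac{1}{\sqrt{2}}I$; both satisfy all the required conditions.
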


\begin{proof}
Let $\Pi = \begin{bmatrix}0 & I \\ I & 0\end{bmatrix}$. It can be 
readily verified that $\Pi$ has the required $J$-spectral factorization with $\psi_1 = \psi_2 = \psi_3 = \frac{1}{\sqrt{2}}I$ 
and $\psi_4 = -\frac{1}{\sqrt{2}}I$. 
Clearly, $\psi_4^{-1}$ is stable and $\psi_1-\psi_2\psi_4^{-1}\psi_3 = \sqrt{2}I$ is injective. Thus, the conditions
required for $J$-spectral factors are satisfied. By definition, the set $\mathcal{G}_2$ with quadratic form $q_{\Pi}^c(\cdot)$ 
is the set of all (LTI) input strictly passive systems. To see this, note that
\begin{align*}
G_2\in\mathcal{G}_2  \Leftrightarrow 
\begin{array}{l}
 \exists~\epsilon>0 \mbox{ s.t. } \left\langle u, (G_2^*+G_2)u \right\rangle_{\LT} = \\ 
 2\left\langle u, G_2 u \right\rangle_{\LT} \ge \epsilon\|u\|_{\LT}^2, \forall u\in\LT.
\end{array}
\end{align*}
Likewise, one can readily verify that
\begin{align*}
-G_1\in\bar{\mathcal{G}}_1   \Leftrightarrow 
\begin{array}{l} 
 2\left\langle u, -G_1 u \right\rangle_{\LT} 
\equiv -2\left\langle u, G_1 u \right\rangle_{\LT} 
\le 0, \\ 
\hspace{4.25cm} \forall u\in\LT.
\end{array}
\end{align*}
Hence $G_1$ is passive. This concludes the proof. 
\end{proof}

\begin{remark}
\label{rmk:aftProp1}
Since the sets $\mathcal{P}_I$, $\mathcal{P}_O$, and $\mathcal{P}$ satisfy the strict inclusion relationship described in (\ref{Psets}), we immediately 
have the following conclusions by Proposition~\ref{prop1}:
\begin{itemize}
\item $G_1$ being passive is \emph{necessary} for $[-G_1,G_2]$ to be stable for 
all output strictly passive $G_2$, and in fact, for all passive $G_2$. 
\item $G_1$ being output strictly passive is \emph{sufficient but not necessary} for $[-G_1,G_2]$ to be stable for all input strictly
passive $G_2$. 
\item $G_1$ being input strictly passive is \emph{sufficient but not necessary} for $[-G_1,G_2]$ to be stable for all input strictly
passive $G_2$. 
\end{itemize}
To see the ``not necessary'' part of the last two statements, take any non-zero skew-symmetric matrix as $G_1$, which is passive 
but not output strictly nor input strictly passive. The sufficiency direction stated in Proposition~\ref{prop1} yields that such $G_1$ will result in stable $[G_1,
G_2]$ for all input strictly passive $G_2$.
\end{remark}

Furthermore, the condition that $G_1$ is passive can also be proven to be sufficient for $[-G_1,G_2]$ to be stable over $\mathcal{P}_O$. 
This leads to the following necessary and sufficient condition. 

\begin{proposition}
\label{prop2}
Consider the feedback interconnected system $[-G_1,G_2]$ such as the one shown in Figure~\ref{fig:1}, where 
the subsystems $G_1$ and $G_2$ are square and stable.
Then the system is stable for any output strictly passive $G_2$ if and only if $G_1$ is passive. 
\end{proposition}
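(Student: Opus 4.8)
The plan is to treat the two implications separately, with essentially all the work in sufficiency. Necessity is immediate from the material already in hand: by the strict inclusion $\mathcal{P}_I\subset\mathcal{P}_O$ recorded in~\eqref{Psets}, stability of $[-G_1,G_2]$ for \emph{every} output strictly passive $G_2$ entails, in particular, stability for every input strictly passive $G_2$; Proposition~\ref{prop1} then forces $G_1$ to be passive. This is exactly the first bullet of Remark~\ref{rmk:aftProp1}.

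For sufficiency I would reduce to the sufficiency half of Theorem~\ref{thm:main2}. Fix an arbitrary output strictly passive $G_2$, so that $\He(\hat{G}_2(\jw))\ge\epsilon\,\hat{G}_2(\jw)^*\hat{G}_2(\jw)$ on $\pS$ for some $\epsilon>0$, and attach to this very $\epsilon$ the constant multiplier $\Pi_\epsilon:=\TwoTwo{0}{I}{I}{-\epsilon I}$. For $\Pi_\epsilon$ one computes $q^c_{\Pi_\epsilon}(G_2)=\He(\hat{G}_2)-\epsilon\,\hat{G}_2^*\hat{G}_2\ge 0$ and $q_{\Pi_\epsilon}(-G_1)=-\He(\hat{G}_1)-\epsilon I$. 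Since $G_1$ is passive, $\He(\hat{G}_1)\ge 0$, whence $q_{\Pi_\epsilon}(-G_1)\le-\epsilon I<0$; that is, $G_2\in\bar{\mathcal{G}}_2$ and $-G_1\in\mathcal{G}_1$ with respect to $\Pi_\epsilon$.

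It remains to exhibit a $J$-spectral factorization of $\Pi_\epsilon$ satisfying (2.1)--(2.3). As $\Pi_\epsilon$ is constant, a constant factorization serves: taking $\psi_1=\psi_3=I$, $\psi_2=\tfrac{1-\epsilon}{2}I$, and $\psi_4=-\tfrac{1+\epsilon}{2}I$ gives $\Psi^*J\Psi=\Pi_\epsilon$, with all $\psi_i$ and $\psi_4^{-1}$ stable, $\Pi_{11}=\psi_1^*\psi_1-\psi_3^*\psi_3=0\ge 0$, and $\Pi_{22}=\psi_2^*\psi_2-\psi_4^*\psi_4=-\epsilon I<0$. Hypotheses (2.1)--(2.3) therefore hold, and the sufficiency direction of Theorem~\ref{thm:main2} yields stability of $[-G_1,G_2]$. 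Since $G_2$ was an arbitrary output strictly passive system, sufficiency follows.

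The main obstacle I anticipate is conceptual rather than computational: output strict passivity is not captured by a single quadratic form but by the \emph{union} over $\epsilon>0$ of the constraint sets $\bar{\mathcal{G}}_2$ attached to the multipliers $\Pi_\epsilon$, so Theorem~\ref{thm:main2} cannot be invoked with one fixed multiplier for the whole class. The device above---matching the multiplier to the index $\epsilon$ of each given $G_2$---succeeds only because passivity of $G_1$ places $-G_1$ in the strict set $\mathcal{G}_1$ for \emph{all} $\epsilon>0$ at once, so the per-$G_2$ choice is always admissible. Should a self-contained route be preferred, one can instead argue directly via Remark~\ref{rmk:1}: reduce stability to $\det(I+\hat{G}_2(\jw)\hat{G}_1(\jw))\neq 0$ on $\bS^c$, exclude boundary zeros on $\pS$ by a pointwise contradiction combining $\mathrm{Re}(x^*\hat{G}_1x)\ge 0$ with $2\,\mathrm{Re}(z^*\hat{G}_2z)\ge\epsilon\|\hat{G}_2z\|^2$ (where $z=\hat{G}_1x$ and $\hat{G}_2z=-x$), and then carry nonsingularity into the interior along the homotopy $\tau\mapsto I+\tau\,\hat{G}_2\hat{G}_1$, $\tau\in[0,1]$, whose winding number is homotopy-invariant and vanishes at $\tau=0$.
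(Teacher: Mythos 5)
Your proof is correct, and while your necessity argument is exactly the paper's (Proposition~\ref{prop1} together with the inclusion $\mathcal{P}_I\subset\mathcal{P}_O$ from~\eqref{Psets}, as recorded in Remark~\ref{rmk:aftProp1}), your sufficiency argument takes a genuinely different route through the main IQC theorems. The paper uses the same family of multipliers $\Pi_\epsilon=\STwoTwo{0}{I}{I}{-\epsilon I}$ but invokes Theorem~\ref{thm:main1}: for each fixed $\epsilon>0$ it concludes stability of $[-G_1,G_2]$ over the \emph{strict} set $\mathcal{G}_2(\Pi_\epsilon)=\{G_2:\He(\hat G_2)-\epsilon\hat G_2^*\hat G_2>0\}$, and then lets $\epsilon$ range over $(0,\infty)$. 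This leaves a small mismatch that your version avoids: output strict passivity is defined by a \emph{non-strict} frequency-domain inequality $\He(\hat G_2)\ge\epsilon_0\hat G_2^*\hat G_2$, and a system such as $G_2=0$ (or any $G_2$ whose transfer matrix loses rank at some point of $\pS$) belongs to no $\mathcal{G}_2(\Pi_\epsilon)$, so the paper's final step from ``stable over $\mathcal{G}_2(\Pi_\epsilon)$ for all $\epsilon$'' to ``stable for every output strictly passive $G_2$'' requires an extra word. Your device of matching the multiplier to the given $G_2$ and invoking the sufficiency half of Theorem~\ref{thm:main2}, whose uncertainty set is the non-strict $\bar{\mathcal{G}}_2(\Pi_\epsilon)$, captures the class $\mathcal{P}_O$ exactly, and the per-$G_2$ choice is legitimate precisely for the reason you state: passivity of $G_1$ places $-G_1$ in the strict set $\mathcal{G}_1(\Pi_\epsilon)$ for all $\epsilon>0$ simultaneously. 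Your constant factorization $\psi_1=\psi_3=I$, $\psi_2=\tfrac{1-\epsilon}{2}I$, $\psi_4=-\tfrac{1+\epsilon}{2}I$ indeed reproduces $\Pi_\epsilon$ and satisfies (2.1)--(2.3), so Theorem~\ref{thm:main2} applies (note condition (1.2) on injectivity, which the paper must check for Theorem~\ref{thm:main1}, is not needed on your route). In short, the paper's route gives one clean equivalence per $\epsilon$ quantified over a whole strict class; yours trades that for exact coverage of $\mathcal{P}_O$ and a cleaner handling of the strict/non-strict bookkeeping. The closing homotopy/winding-number sketch is a reasonable self-contained alternative but is not needed given the reduction to Theorem~\ref{thm:main2}.
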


\begin{proof}
  Necessity is established in Proposition~\ref{prop1} and Remark~\ref{rmk:aftProp1}. Sufficiency is in fact well-known, see e.g.~\cite{BaoLee07,
    GreLim95, Sch17}.  Here we show that the result can also be obtained by applying Theorem~\ref{thm:main1}.  Let $\epsilon>0$ be any positive real
  number and $\Pi = \begin{bmatrix}0 & I \\ I & -\epsilon I \end{bmatrix}$, which has the required $J$-spectral factorization with
  $\psi_1 = \psi_3 = (\sqrt{\epsilon}-\sqrt{2\epsilon})^{-1}I$, $\psi_2 = \sqrt{\epsilon}I$, and
  $\psi_4=\sqrt{2\epsilon}I$. 
  Thus, by applying Theorem~\ref{thm:main1} we obtain the following condition: $[-G_1,G_2]$ is stable for all $G_2$ satisfying
  $\He(\hat{G}_2(\jw)) > \epsilon \hat{G}_2(\jw)^*\hat{G}_2(\jw)$ for all $\jw\in\partial\bS$ if and only if
\begin{align}
\He(\hat{G}_1(\jw)) \ge -\epsilon I \mbox{ for all } \jw\in\partial\bS. 
\label{ineq:prop2}
\end{align}
If $G_1$ is passive, then \eqref{ineq:prop2} holds for any $\epsilon >0$. This in turn 
implies $[G_1,G_2]$ is stable for each and every $G_2$ that is output strictly passive. 
\end{proof}

Lastly, the following sufficient condition can also be proven by applying Theorem~\ref{thm:main1}.

\begin{proposition}
\label{prop3}
Consider the feedback interconnected system $[-G_1,G_2]$ such as the one shown in Figure~\ref{fig:1}, where 
the subsystems $G_1$ and $G_2$ are square and stable.
If $G_1$ is output strictly passive, then the system is stable for any passive $G_2$. 
\end{proposition}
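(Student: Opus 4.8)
The plan is to mirror the proofs of Propositions~\ref{prop1} and~\ref{prop2}: invoke only the \emph{sufficiency} direction of Theorem~\ref{thm:main1} with a multiplier $\Pi$ chosen so that membership of $-G_1$ in $\bar{\mathcal{G}}_1$ encodes output strict passivity of $G_1$, while the strict set $\mathcal{G}_2$ contains every passive system. Since $G_1$ is output strictly passive, I would first fix the constant $\epsilon>0$ for which $\He(\hat{G}_1(\jw))\ge\epsilon\hat{G}_1(\jw)^*\hat{G}_1(\jw)$ for all $\jw\in\pS$, and take the multiplier
\[
\Pi=\TwoTwo{\epsilon I}{I}{I}{0}.
\]
I would then exhibit the constant $J$-spectral factors $\psi_1=\tfrac{1+\epsilon}{2}I$, $\psi_2=\psi_4=I$, $\psi_3=\tfrac{\epsilon-1}{2}I$, and verify by direct multiplication that $\Psi^*J\Psi=\Pi$ (the off-diagonal block reduces to $\psi_1-\psi_3=1$ and the diagonal blocks to $\psi_1^2-\psi_3^2=\epsilon$ and $\psi_2^2-\psi_4^2=0$).

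Next I would confirm the hypotheses (1.1)--(1.4) of Theorem~\ref{thm:main1}. The factors are constant and therefore stable, and $\psi_4^{-1}=I$ is stable, which gives (1.1); the quantity $\psi_1-\psi_2\psi_4^{-1}\psi_3=I$ is injective, giving (1.2); and $\Pi_{11}=\epsilon I\ge 0$, $\Pi_{22}=0\le 0$ furnish (1.3)--(1.4). With the factorization validated, I would compute the two $\Pi$-weighted quadratic forms. For the first subsystem, $q_{\Pi}(-G_1)=\epsilon\hat{G}_1^*\hat{G}_1-\He(\hat{G}_1)$, so the inequality $q_{\Pi}(-G_1)\le 0$ is \emph{precisely} output strict passivity of $G_1$ with constant $\epsilon$; hence $-G_1\in\bar{\mathcal{G}}_1$. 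For the uncertainty, $q_{\Pi}^c(G_2)=\epsilon I+\He(\hat{G}_2)$, so any passive $G_2$, which satisfies $\He(\hat{G}_2(\jw))\ge 0$, obeys $q_{\Pi}^c(G_2)\ge\epsilon I>0$, i.e.\ $G_2\in\mathcal{G}_2$.

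With both memberships established, the sufficiency direction of Theorem~\ref{thm:main1} applied to the interconnection $[-G_1,G_2]$ (treating $-G_1$ as the first subsystem) immediately yields that $[-G_1,G_2]$ is stable for every $G_2\in\mathcal{G}_2$, and in particular for every passive $G_2$, which is the claim.

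The point requiring the most care, and the one I would highlight as the main obstacle, is the interface between the strict and non-strict conditions: passivity of $G_2$ is a non-strict property ($\He(\hat{G}_2)\ge 0$), yet Theorem~\ref{thm:main1} requires $G_2$ to lie in the \emph{strict} set $\mathcal{G}_2$. This is resolved precisely by the $\epsilon I$ term in $q_{\Pi}^c$, which supplies a uniform positive margin so that every passive system is absorbed into $\mathcal{G}_2$. One must also take the multiplier's parameter to be the specific output-passivity constant $\epsilon$ of $G_1$ rather than an arbitrary value, so that the $\bar{\mathcal{G}}_1$ condition matches output strict passivity exactly. No converse is attempted here, consistent with the statement claiming only the ``if'' direction.
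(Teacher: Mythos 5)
Your proof is correct and takes essentially the same route as the paper: both choose the multiplier $\Pi=\STwoTwo{\epsilon I}{I}{I}{0}$ with $\epsilon$ the output-passivity constant of $G_1$, verify conditions (1.1)--(1.4) for a constant $J$-spectral factorization, and invoke the sufficiency direction of Theorem~\ref{thm:main1} to get stability over all $G_2$ with $\He(\hat{G}_2(\jw))>-\epsilon I$, hence over all passive $G_2$. The only (immaterial) difference is the choice of factors---the paper uses $\psi_1=\sqrt{2\epsilon}I$, $\psi_3=\sqrt{\epsilon}I$, $\psi_2=\psi_4=(\sqrt{2\epsilon}-\sqrt{\epsilon})^{-1}I$ instead of yours, both of which satisfy the required conditions---and your explicit tracking of the sign in $-G_1\in\bar{\mathcal{G}}_1$ is if anything more careful than the paper's.
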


\begin{proof}
Let $\epsilon>0$ be such that $\He(\hat{G}_1(\jw)) \ge\epsilon\hat{G}_1(\jw)^*\hat{G}_1(\jw)$ for all $\jw\in\partial\bS$.
Define $\Pi:=\begin{bmatrix}\epsilon I & I \\ I & 0 \end{bmatrix}$ and one can readily verify that $G_1\in\bar{\mathcal{G}}_1$,  
where $\bar{\mathcal{G}}_1$ defined by the quadratic form $q_\Pi(\cdot)$. Moreover, $\Pi$ has the required $J$-spectral factorization with
$\psi_1=\sqrt{2\epsilon}I$, $\psi_3=\sqrt{\epsilon}I$, $\psi_2=\psi_4=(\sqrt{2\epsilon}-\sqrt{\epsilon})^{-1}I$. 
Thus, by applying Theorem~\ref{thm:main1}, we conclude
that $[G_1,G_2]$ is stable for all $G_2$ satisfying $\He(\hat{G}_2(\jw)) > -\epsilon I$ for all $\jw\in\partial\bS$, which
in turn implies $[G_1,G_2]$ is stable for all passive $G_2$. 
\end{proof}

\begin{remark}
\label{rmk:aftProp3}
As before, the strict inclusion relationship described in (\ref{Psets}) together with the sufficient conditions stated in 
Propositions~\ref{prop2} and~\ref{prop3} immediately lead to the following conclusions
\begin{itemize}
\item $G_1$ being output strictly passive is \emph{sufficient but not necessary} for $[-G_1,G_2]$ to be stable for all output strictly
passive $G_2$. 
\item $G_1$ being input strictly passive is \emph{sufficient but not necessary} for $[-G_1,G_2]$ to be stable for all output strictly
passive $G_2$. 
\item $G_1$ being input strictly passive is \emph{sufficient but not necessary} for $[-G_1,G_2]$ to be stable for all 
passive $G_2$. 
\end{itemize}
\end{remark}

\noindent 
Finally, we note that it is well-known that $G_1$ being passive is not sufficient for $[-G_1,G_2]$ to be stable for all
passive $G_2$. Take $G_1 = G_2 = \begin{bmatrix} 0 & I \\ -I & 0 \end{bmatrix}$ for example; 
$G_1$ and $G_2$ are both passive but $I+G_1G_2=0$, which is not invertible. As such, $[-G_1,G_2]$ is not even 
well-posed, let alone stable.  
Table~\ref{tab:stab} summarizes the conditions for \emph{robust stability} we have discovered so far. 
\begin{table}
\label{tab:stab}
\begin{center}
Conditions for robust stability of $[-G_1,G_2]$ over a set of $G_2$.

\vspace{0.25cm}
\begin{tabular}{|l||c|c|c|}
  \hline
  & $G_1\in\mathcal{P}$   & $G_1\in\mathcal{P}_O$          & $G_1\in\mathcal{P}_I$  \\
  \hhline{|=||=|=|=|}
  $\forall G_2\in\mathcal{P}$   & $\mathrm{N}\cancel{\mathrm{S}}$           & $\mathrm{S}$                    & $\cancel{\mathrm{N}}\mathrm{S}$   \\
  \hline
  $\forall G_2\in\mathcal{P}_O$ & $\mathrm{N}\mathrm{S}$ & $\cancel{\mathrm{N}}\mathrm{S}$ & $\cancel{\mathrm{N}}\mathrm{S}$     \\
  \hline
  $\forall G_2\in\mathcal{P}_I$ & $\mathrm{N}\mathrm{S}$ & $\cancel{\mathrm{N}}\mathrm{S}$ & $\cancel{\mathrm{N}}\mathrm{S}$  \\
  \hline
\end{tabular}
\end{center}
\caption{}
\vspace{-0.25cm}
{\footnotesize
N~/~$\cancel{\mathrm{N}}$: the condition in the top row (is / is not) necessary for stability over the set in the first column.\\
S~/~$\cancel{\mathrm{S}}$: the condition in the top row (is / is not) sufficient for stability over the set in the first column.\\
}
\end{table}

\subsection{Robust Uniform Stability}
\label{sec:unistab}

If we impose uniformity on closed-loop stability, then Theorem~\ref{thm:main3} can be applied to obtain the following necessary and sufficient 
condition for robustness against passivity, which can be viewed as the dual of Proposition~\ref{prop1}. 

\begin{proposition}
\label{prop4}
Consider the feedback interconnected system $[-G_1,G_2]$ such as the one shown in Figure~\ref{fig:1}, where the subsystems 
$G_1$ and $G_2$ are square and stable.
Then the system is uniformly stable for any passive $G_2$ if and only if $G_1$ is input strictly passive. 
\end{proposition}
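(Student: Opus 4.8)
The plan is to specialize Theorem~\ref{thm:main3} with the very same multiplier used in Proposition~\ref{prop1}, namely $\Pi = \TwoTwo{0}{I}{I}{0}$, but to invoke the \emph{uniform} stability version in place of Theorem~\ref{thm:main1}. Recall from the proof of Proposition~\ref{prop1} that this $\Pi$ admits the $J$-spectral factorization with $\psi_1 = \psi_2 = \psi_3 = \frac{1}{\sqrt{2}}I$ and $\psi_4 = -\frac{1}{\sqrt{2}}I$. First I would check that this factorization meets conditions (3.1)--(3.3): the factors are constant, hence stable, and $\psi_4^{-1} = -\sqrt{2}I$ is stable, which gives (3.1); a direct computation yields $\Pi_{11} = \psi_1^*\psi_1 - \psi_3^*\psi_3 = 0 \ge 0$ and $\Pi_{22} = \psi_2^*\psi_2 - \psi_4^*\psi_4 = 0 \le 0$, which are precisely (3.2) and (3.3).

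Next I would translate the two sets appearing in Theorem~\ref{thm:main3} into passivity statements. For $H$ with $\hat{H} \in \RHinf$, one computes $q_{\Pi}^c(H) = \He(H)$, so membership in the non-strict set $\bar{\mathcal{G}}_2$ (i.e.\ $q_{\Pi}^c(H) \ge 0$) is exactly the condition $\He(\hat{H}(\jw)) \ge 0$ for all $\jw \in \pS$, that is, $H$ is passive. Applying the theorem to $-G_1$ in place of $G_1$, I would likewise compute $q_{\Pi}(-G_1) = -\He(G_1)$, so that $-G_1 \in \mathcal{G}_1$ (the strict set, $q_{\Pi}(-G_1) < 0$) is equivalent to $\He(\hat{G}_1(\jw)) > 0$ for all $\jw \in \pS$, i.e.\ $G_1$ is input strictly passive.

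With these identifications in hand, Theorem~\ref{thm:main3} directly yields that $[-G_1, G_2]$ is uniformly stable for every passive $G_2$ if and only if $G_1$ is input strictly passive, which is the claimed equivalence; since Theorem~\ref{thm:main3} is itself an iff, no separate sufficiency and necessity arguments are needed beyond the sign and strict/non-strict bookkeeping. The point I would emphasise is that uniformity is essential rather than cosmetic: because $\Pi_{22} = 0$, the strict negativity condition (2.3) of Theorem~\ref{thm:main2} fails, so the plain stability version cannot be applied to this multiplier, whereas the relaxed condition (3.3) is met with equality. The entire substance of Proposition~\ref{prop4} therefore lies in the relaxation from (2.3) to (3.3) afforded by Theorem~\ref{thm:main3}, which is exactly what makes this the genuine dual of Proposition~\ref{prop1}.
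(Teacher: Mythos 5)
Your proposal is correct and takes essentially the same route as the paper's own proof: the paper likewise specializes Theorem~\ref{thm:main3} to $\Pi = \STwoTwo{0}{I}{I}{0}$ with the factorization $\psi_1=\psi_2=\psi_3=\frac{1}{\sqrt{2}}I$, $\psi_4=-\frac{1}{\sqrt{2}}I$ from Proposition~\ref{prop1}, identifies $\bar{\mathcal{G}}_2$ with the set of LTI passive systems and $-G_1\in\mathcal{G}_1$ with input strict passivity of $G_1$, and concludes directly. Your closing observation that the relaxation from condition (2.3) to (3.3) is what makes the uniform-stability theorem indispensable here matches the remark the paper places after Theorem~\ref{thm:main2}.
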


\begin{proof}
Let $\Pi = \begin{bmatrix}0 & I \\ I & 0\end{bmatrix}$. It has been established in Proposition~\ref{prop1} that 
the $\Pi$ has the $J$-spectral factorization which satisfies conditions (1.1) to (1.4), and therefore also 
conditions (3.1) to (3.3). Furthermore, by the arguments similar to those in the proof of Proposition~\ref{prop1}, one
can readily verify that with this $\Pi$, $\bar{\mathcal{G}}_2$ is the set of all LTI passive systems, while 
\begin{align*}
-G_1\in\mathcal{G}_1 \Leftrightarrow 
\begin{array}{l}
\exists~\epsilon>0 \mbox{ s.t. } \left\langle u, -(G_1^*+G_1)u \right\rangle_{\LT}= \\
-2\left\langle u, G_1 u \right\rangle_{\LT} \le -\epsilon\|u\|_{\LT}^2, \forall u\in\LT.
\end{array}
\end{align*}
Hence $G_1$ is input strictly passive. This concludes the proof. 
\end{proof}

\begin{remark}
\label{rmk:aftProp4}
By Proposition~\ref{prop4} and the strict inclusion relationship~\eqref{Psets}, we may arrive at the following conclusions:
\begin{itemize}
\item $G_1$ being input strictly passive is \emph{sufficient} for $[-G_1,G_2]$ to be uniformly stable for 
all output strictly passive $G_2$ and for all input strictly passive $G_2$. 
\item $G_1$ being output strictly passive is \emph{necessary} for $[-G_1,G_2]$ to be uniformly stable for all passive $G_2$. 
\item $G_1$ being passive is \emph{necessary} for $[-G_1,G_2]$ to be uniformly stable for all passive $G_2$.
\end{itemize}
\end{remark}

The following necessary and sufficient conditions also follow immediately from Theorem~\ref{thm:main3} by taking appropriate $\Pi$'s. 

\begin{proposition}
\label{prop5}
Consider the feedback interconnected system $[-G_1,G_2]$ such as the one shown in Figure~\ref{fig:1}, 
where the subsystems $G_1$ and $G_2$ are square and stable.
\begin{itemize}
\item[(\ref{prop5}.1)] Given any $\varepsilon>0$, the system is uniformly stable for all $G_2$ satisfying 
$\hat{G}_2(\jw)^*+\hat{G}_2(\jw)\ge\varepsilon \hat{G}_2(\jw)^*\hat{G}_2(\jw)$~$\forall\jw\in\partial\bS$
if and only if $G_1$ satisfies $\hat{G}_1(\jw)^*+\hat{G}_1(\jw)>-\varepsilon I$~$\forall\jw\in\partial\bS$. 
\item[(\ref{prop5}.2)] Given any $\varepsilon>0$, the system is uniformly stable for all $G_2$ satisfying 
$\hat{G}_2(\jw)^*+\hat{G}_2(\jw)\ge-\varepsilon I$~$\forall\jw\in\partial\bS$
if and only if $G_1$ satisfies $\hat{G}_1(\jw)^*+\hat{G}_1(\jw)> \varepsilon \hat{G}_1(\jw)^*\hat{G}_1(\jw)$~$\forall\jw\in\partial\bS$. 
\end{itemize}
\end{proposition}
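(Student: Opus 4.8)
The plan is to obtain both parts as direct specializations of Theorem~\ref{thm:main3}, reusing the constant multipliers already constructed in the proofs of Propositions~\ref{prop2} and~\ref{prop3}, but this time invoking the \emph{uniform}-stability conclusion over $\bar{\mathcal{G}}_2$ rather than the plain-stability conclusion over $\mathcal{G}_2$. Throughout, the negative-feedback convention $[-G_1,G_2]$ dictates that I substitute $G=-G_1$ into $q_{\Pi}(\cdot)$, so that the membership $-G_1\in\mathcal{G}_1$ delivered by the theorem becomes an inequality on $\He(\hat{G}_1)$. In each part the argument splits into two routine steps: first verify that the proposed $J$-spectral factorization satisfies (3.1)--(3.3), and then read off the sets $\bar{\mathcal{G}}_2$ and $\mathcal{G}_1$ from the quadratic forms $q_{\Pi}^c$ and $q_{\Pi}$.

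For part~(\ref{prop5}.1) I would take $\Pi=\TwoTwo{0}{I}{I}{-\varepsilon I}$, as in Proposition~\ref{prop2}, with the factorization $\psi_1=\psi_3=(\sqrt{\varepsilon}-\sqrt{2\varepsilon})^{-1}I$, $\psi_2=\sqrt{\varepsilon}I$, $\psi_4=\sqrt{2\varepsilon}I$. The factors are constant and invertible, so (3.1) holds; $\Pi_{11}=\psi_1^*\psi_1-\psi_3^*\psi_3=0$ gives (3.2), and $\Pi_{22}=\varepsilon I-2\varepsilon I=-\varepsilon I\le 0$ gives (3.3)---note that, unlike in Theorem~\ref{thm:main1}, no injectivity condition is needed here. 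Computing the forms yields $q_{\Pi}^c(G_2)=\He(\hat{G}_2)-\varepsilon\hat{G}_2^*\hat{G}_2$ and $q_{\Pi}(-G_1)=-\He(\hat{G}_1)-\varepsilon I$, so that $\bar{\mathcal{G}}_2$ is exactly the stated class of $G_2$ and $-G_1\in\mathcal{G}_1$ is exactly $\He(\hat{G}_1(\jw))>-\varepsilon I$ on $\partial\bS$; Theorem~\ref{thm:main3} then gives the equivalence.

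For part~(\ref{prop5}.2) I would take $\Pi=\TwoTwo{\varepsilon I}{I}{I}{0}$, as in Proposition~\ref{prop3}, with $\psi_1=\sqrt{2\varepsilon}I$, $\psi_3=\sqrt{\varepsilon}I$, $\psi_2=\psi_4=(\sqrt{2\varepsilon}-\sqrt{\varepsilon})^{-1}I$. Here (3.1) again holds since $\psi_4$ is constant and invertible, while $\Pi_{11}=\varepsilon I\ge 0$ and $\Pi_{22}=0\le 0$ give (3.2)--(3.3). The forms compute to $q_{\Pi}^c(G_2)=\varepsilon I+\He(\hat{G}_2)$ and $q_{\Pi}(-G_1)=\varepsilon\hat{G}_1^*\hat{G}_1-\He(\hat{G}_1)$, identifying $\bar{\mathcal{G}}_2$ with the class $\He(\hat{G}_2)\ge-\varepsilon I$ and $-G_1\in\mathcal{G}_1$ with $\He(\hat{G}_1)>\varepsilon\hat{G}_1^*\hat{G}_1$. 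A second application of Theorem~\ref{thm:main3} closes the argument.

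The one point I would flag as requiring care---the main obstacle, such as it is---is the passage between the operator-level definitions of the sets and the pointwise frequency-domain inequalities stated in the proposition. The membership $G_2\in\bar{\mathcal{G}}_2$ is a non-strict operator inequality $q_{\Pi}^c(G_2)\ge 0$, which matches the pointwise ``$\ge$'' directly. The membership $-G_1\in\mathcal{G}_1$, however, is the \emph{uniform} coercivity $q_{\Pi}(-G_1)<0$ (i.e.\ $\langle u,q_{\Pi}(-G_1)u\rangle\le-\epsilon\|u\|^2$ for some $\epsilon>0$), whereas the proposition writes a pointwise strict inequality such as $\He(\hat{G}_1(\jw))>-\varepsilon I$. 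These two are equivalent because $\hat{G}_1\in\RHinf$ is continuous on the compactified stability boundary, so a pointwise strict inequality automatically supplies the uniform gap; once this standard equivalence is invoked, both parts reduce to the bookkeeping above.
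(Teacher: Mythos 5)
Your proposal is correct and is essentially the paper's own proof: the paper likewise obtains both parts by applying Theorem~\ref{thm:main3} with $\Pi=\STwoTwo{0}{I}{I}{-\varepsilon I}$ and $\Pi=\STwoTwo{\varepsilon I}{I}{I}{0}$ respectively, reusing the constant $J$-spectral factorizations already exhibited in Propositions~\ref{prop2} and~\ref{prop3}, and merely omits the verification and set-identification details that you spell out. Your closing point about passing from the pointwise strict inequality on $\pS$ to the uniform operator coercivity defining $\mathcal{G}_1$ (via continuity of $\hat{G}_1\in\RHinf$ on the compactified boundary) is the same identification the paper makes implicitly throughout its passivity propositions.
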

\begin{proof}
Statements (\ref{prop5}.1)~and~(\ref{prop5}.2) are obtained by applying Theorem~\ref{thm:main3} with 
$\Pi = \begin{bmatrix}0 & I \\ I & -\varepsilon I\end{bmatrix}$ and $\Pi = \begin{bmatrix}\varepsilon I & I \\ I & 0\end{bmatrix}$, respectively. 
The arguments are similar to those in the previous propositions. Here we omit the details. 
\end{proof}

By combining Propositions~\ref{prop4} and~\ref{prop5}, we obtain the following result that relates the passivity indices of $G_1$ to those of $G_2$.

\begin{proposition}
\label{prop:indices}
Consider the feedback interconnected system $[-G_1,G_2]$ such as the one shown in Figure~\ref{fig:1}, where the subsystems $G_1$ and $G_2$
are square and stable. Let the
input and output passivity indices of $G_1$ be $\nu_1$ and $\rho_1>0$, respectively. Then, given $\nu_2 \in \Real$ and $\rho_2 \ge 0$,
\begin{itemize}
\item[(\ref{prop:indices}.1)] the system is uniformly stable for all $G_2$ with output passivity index at least $\rho_2$ if and only if
  $\nu_1 + \rho_2 > 0$.
\item[(\ref{prop:indices}.2)] the system is uniformly stable for all $G_2$ with input passivity index at least $\nu_2$ if and only if
  $\rho_1 + \nu_2 > 0$.
\end{itemize}
\end{proposition}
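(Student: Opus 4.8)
The plan is to read both statements off as specializations of the uniform-stability results already in hand, namely Proposition~\ref{prop4} and Proposition~\ref{prop5} (which themselves rest on Theorem~\ref{thm:main3}). The only real work is a dictionary between three equivalent descriptions: the passivity indices, the pointwise frequency inequalities in Proposition~\ref{prop5}, and the underlying energy inequalities. I would fix the translation that an output passivity index of $G_2$ at least $\rho_2$ means $\He(\hat{G}_2(\jw))\ge 2\rho_2\,\hat{G}_2(\jw)^*\hat{G}_2(\jw)$ for all $\jw\in\pS$, that an input passivity index of $G_2$ at least $\nu_2$ means $\He(\hat{G}_2(\jw))\ge 2\nu_2 I$, and that $\nu_1,\rho_1$ are the largest constants with $\He(\hat{G}_1(\jw))\ge 2\nu_1 I$ and $\He(\hat{G}_1(\jw))\ge 2\rho_1\,\hat{G}_1(\jw)^*\hat{G}_1(\jw)$ for all $\jw\in\pS$, respectively.

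For part~(\ref{prop:indices}.1) I would split on the sign of $\rho_2$. When $\rho_2>0$, the admissible set is exactly the $G_2$-set in statement~(\ref{prop5}.1) with $\varepsilon=2\rho_2$, so that result yields uniform stability if and only if $\He(\hat{G}_1(\jw))>-2\rho_2 I$ for all $\jw\in\pS$. It then remains to show this pointwise condition is equivalent to $2\nu_1>-2\rho_2$, i.e.\ $\nu_1+\rho_2>0$. Here I would use that, since $\hat{G}_1\in\RHinf$ is rational, $\jw\mapsto\lambda_{\min}(\He(\hat{G}_1(\jw)))$ is continuous on the compact boundary (its one-point compactification in the CT case) and so attains its infimum, which equals $2\nu_1$ by definition; the pointwise strict inequality, tested at the minimiser, is then precisely $2\nu_1>-2\rho_2$. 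The boundary case $\rho_2=0$ is not covered by~(\ref{prop5}.1), which needs $\varepsilon>0$, but there the admissible set is exactly the passive systems, so Proposition~\ref{prop4} applies directly and gives $\nu_1>0$, consistent with $\nu_1+\rho_2>0$.

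Part~(\ref{prop:indices}.2) is the dual, argued the same way through statement~(\ref{prop5}.2). For $\nu_2<0$, the admissible set $\{\He(\hat{G}_2)\ge 2\nu_2 I\}$ is the $G_2$-set of~(\ref{prop5}.2) with $\varepsilon=-2\nu_2>0$, giving uniform stability if and only if $\He(\hat{G}_1(\jw))>-2\nu_2\,\hat{G}_1(\jw)^*\hat{G}_1(\jw)$ for all $\jw$, which I would convert to $2\rho_1>-2\nu_2$, i.e.\ $\rho_1+\nu_2>0$, again via attainment of the relevant extremum over the compact boundary. For $\nu_2\ge 0$ the admissible set is contained in every set $\{\He(\hat{G}_2)\ge-\varepsilon I\}$ with $\varepsilon>0$; choosing any $\varepsilon\in(0,2\rho_1)$ and invoking~(\ref{prop5}.2) then shows uniform stability is inherited, while $\rho_1+\nu_2>0$ holds automatically under the standing hypothesis $\rho_1>0$, so both sides of the claimed equivalence are true.

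I expect the delicate step to be the passage between the pointwise frequency inequalities and the index inequalities, not the appeal to Propositions~\ref{prop4} and~\ref{prop5}. Two issues need care. First, the equivalence ``strict for all $\jw$'' $\Leftrightarrow$ ``strict at the extremal value equal to the index'' genuinely relies on rationality of $\hat{G}_1$ so that the extremum is attained on the compactified boundary; this is also what makes $\nu_1$ and $\rho_1$ honest maxima rather than mere suprema. Second, in part~(\ref{prop:indices}.2) the comparison is against the frequency-dependent weight $\hat{G}_1(\jw)^*\hat{G}_1(\jw)$ rather than a constant, so at boundary frequencies where $\hat{G}_1$ is rank-deficient the strict inequality can fail on the kernel directions; under $\rho_1>0$ I would argue that those directions carry no loop gain and therefore do not bear on uniform stability, reconciling the index inequality with the stated conclusion.
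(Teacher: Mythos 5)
Your handling of statement (\ref{prop:indices}.1) is correct and follows the paper's own (implicit) proof: the paper offers nothing beyond ``combine Propositions~\ref{prop4} and~\ref{prop5}'', and your dictionary is exactly the missing detail --- for $\rho_2>0$ the admissible class is the $G_2$-set of (\ref{prop5}.1) with $\varepsilon = 2\rho_2$, and since $\hat{G}_1$ is rational, $\jw\mapsto\lambda_{\min}(\He(\hat{G}_1(\jw)))$ attains its infimum $2\nu_1$ on $\pS$ (together with the point at infinity in the CT case), so the pointwise condition $\He(\hat{G}_1(\jw))>-2\rho_2 I$ is equivalent to $\nu_1+\rho_2>0$; the case $\rho_2=0$ is correctly dispatched by Proposition~\ref{prop4}.

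For statement (\ref{prop:indices}.2), however, there is a genuine gap, and it sits exactly at the step you flagged as delicate --- but your proposed resolution is backwards. At a boundary frequency where $\hat{G}_1(\jw)$ drops rank, the kernel directions are not directions that ``carry no loop gain and do not bear on uniform stability''; they are precisely the directions in which $G_2$ reaches the closed-loop output unattenuated, and since the class $\{G_2 : \nu(G_2)\ge\nu_2\}$ contains members of arbitrarily large norm, uniform stability breaks exactly there. Concretely (CT case), take $\hat{G}_1(s)=s/(s+1)$: on the imaginary axis $\He(\hat{G}_1(\jw)) = 2|\hat{G}_1(\jw)|^2$, so $\rho_1=1>0$ and the standing hypothesis holds, while $\hat{G}_1(0)=0$ gives $\nu_1=0$. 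For any fixed $\nu_2\in\Real$, every static gain $G_2=k$ with $k\ge\max\{\nu_2,1\}$ has input passivity index $k\ge\nu_2$, yet the closed-loop map $d_2\mapsto y_2=G_2(I+G_1G_2)^{-1}d_2$ has transfer function $k(s+1)/((1+k)s+1)$, whose norm is at least its value $k$ at $s=0$; each loop is stable but $\sup_k\|[-G_1,G_2]\|=\infty$, even though $\rho_1+\nu_2>0$. So your claim that for $\nu_2\ge 0$ ``both sides of the equivalence are true'' is false, and for $\nu_2<0$ the implication $\rho_1+\nu_2>0\Rightarrow\He(\hat{G}_1(\jw))>-2\nu_2\,\hat{G}_1(\jw)^*\hat{G}_1(\jw)$ pointwise genuinely fails on kernel directions. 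What your compactness argument does establish is the necessity half: uniform stability implies, via (\ref{prop5}.2), the pointwise condition, which implies $\rho_1+\nu_2>0$. The pointwise condition of (\ref{prop5}.2) is strictly stronger than the index inequality whenever $\hat{G}_1$ is singular somewhere on the (compactified) boundary; the two coincide --- and the proposition as literally stated becomes provable by your route --- only under the additional hypothesis that $\hat{G}_1(\jw)$ is nonsingular there (equivalently, given $\rho_1>0$, that $\nu_1>0$). No argument of the kind you sketch can close this, because the defect is in the sufficiency direction of the statement itself under the paper's definition of the output passivity index, not merely in the proof.
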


The first statement in the Proposition~\ref{prop:indices} provides a lower bound on the input passivity deficit in $G_1$ for which an excess of output
passivity in an arbitrary $G_2$ can compensate. The second statement lower bounds the output passivity surplus in $G_1$ that is needed to compensate
for a lack of input passivity in an arbitrary $G_2$. Sufficiency of these statements is well known in the literature; see~\cite{BaoLee07, KMXGA14} and
the references therein. The proofs of necessity given in this paper are novel.

Statement (\ref{prop5}.1) leads to the following conditions regarding uniform stability over the set of output strictly passive systems. 

\begin{proposition}
\label{prop6}
Consider the feedback interconnected system $[-G_1,G_2]$ such as the one shown in Figure~\ref{fig:1}, where 
the subsystems $G_1$ and $G_2$ are square and stable.
\begin{itemize}
\item[(\ref{prop6}.1)]
If the system is uniformly stable for all output strictly passive $G_2$, then $G_1$ is passive. 
\item[(\ref{prop6}.2)] If $G_1$ is passive, then for any $\varepsilon>0$, the system is uniformly stable for all $G_2$ with output passivity index $\varepsilon$.
\end{itemize}
\end{proposition}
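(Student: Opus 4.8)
The plan is to derive both claims directly from Statement~(\ref{prop5}.1), exploiting the observation that the family of $G_2$ appearing there, namely those satisfying $\He(\hat{G}_2(\jw)) \ge \varepsilon \hat{G}_2(\jw)^*\hat{G}_2(\jw)$ for all $\jw\in\pS$, is precisely the set of systems whose output passivity index is at least $\varepsilon/2$, and that this family sweeps out all output strictly passive systems as $\varepsilon\to 0^+$. Throughout, the minus sign in $[-G_1,G_2]$ is already absorbed into Statement~(\ref{prop5}.1), so no extra care is needed there.

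For the sufficiency part~(\ref{prop6}.2), I would fix $\varepsilon>0$ and apply Statement~(\ref{prop5}.1) with its free parameter set to $2\varepsilon$. The admissible set of $G_2$ then becomes exactly those with $\He(\hat{G}_2(\jw)) \ge 2\varepsilon\,\hat{G}_2(\jw)^*\hat{G}_2(\jw)$, i.e.\ those with output passivity index at least $\varepsilon$, and Statement~(\ref{prop5}.1) asserts uniform stability over this set if and only if $\He(\hat{G}_1(\jw)) > -2\varepsilon I$ for all $\jw\in\pS$. Since $G_1$ is passive, $\He(\hat{G}_1(\jw)) \ge 0 > -2\varepsilon I$, so the right-hand condition holds and uniform stability follows. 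This direction is routine once the factor-of-two parameter matching is made explicit.

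For the necessity part~(\ref{prop6}.1), I would argue by contraposition. Suppose $G_1$ is not passive; since $\hat{G}_1$ is continuous on $\pS$, there is a frequency $\jw_0$ at which $\lambda_{\min}\!\left(\He(\hat{G}_1(\jw_0))\right) = -2\delta < 0$ for some $\delta>0$. Taking $\varepsilon := \delta$, the condition $\He(\hat{G}_1(\jw)) > -\varepsilon I$ fails at $\jw_0$, so, since the $G_1$-side condition of Statement~(\ref{prop5}.1) fails, that statement forces the interconnection to be \emph{not} uniformly stable over the set of $G_2$ with $\He(\hat{G}_2(\jw)) \ge \varepsilon\,\hat{G}_2(\jw)^*\hat{G}_2(\jw)$. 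I would then invoke monotonicity of uniform stability under set inclusion, which is immediate from Definition~\ref{def:US}: every member of this last set has output passivity index at least $\varepsilon/2 > 0$ and is therefore output strictly passive, so if $[-G_1,G_2]$ were uniformly stable over the entire output-strictly-passive family, the same gain bound would apply to this subset, contradicting the failure just established. Hence $G_1$ must be passive.

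The main obstacle, though modest, is careful bookkeeping of the two conventions that differ by a factor of two: the parameter $\varepsilon$ in Statement~(\ref{prop5}.1) enters the $G_2$ constraint as $\He(\hat{G}_2) \ge \varepsilon\,\hat{G}_2^*\hat{G}_2$, whereas the output passivity index $\rho$ enters as $\He(\hat{G}_2) \ge 2\rho\,\hat{G}_2^*\hat{G}_2$, so the set in~(\ref{prop5}.1) corresponds to index $\varepsilon/2$. One must also respect that the negated $G_1$-condition in~(\ref{prop6}.1) is the strict inequality, so choosing $\varepsilon$ strictly below $2\delta$ (e.g.\ $\varepsilon=\delta$) is what guarantees genuine failure of $\He(\hat{G}_1(\jw_0)) > -\varepsilon I$ and hence a legitimate application of the converse contained in Statement~(\ref{prop5}.1).
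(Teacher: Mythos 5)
Your proposal is correct and follows essentially the same route as the paper: statement (\ref{prop6}.2) is read off from the sufficiency direction of (\ref{prop5}.1), and (\ref{prop6}.1) is proved by contraposition via the necessity direction of (\ref{prop5}.1) together with the inclusion of the $\varepsilon$-constrained set inside the output strictly passive systems. Your explicit factor-of-two bookkeeping between the parameter in (\ref{prop5}.1) and the output passivity index is a detail the paper glosses over, but it does not change the argument.
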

\begin{proof}
Statement (\ref{prop6}.2) follows straightforwardly the sufficiency part of statement (\ref{prop5}.1). To establish statement
(\ref{prop6}.1), note that if $G_1$ is not passive, then there exists $\varepsilon>0$ such that 
$\He(\hat{G}_1(\jw))\not>-\varepsilon I$ at some $\jw\in\partial\bS$. Hence by the necessity part of statement (\ref{prop5}.1),
$G_1$ would fail to make $[-G_1,G_2]$ uniformly stable for all $G_2$ satisfying 
$\He(\hat{G}_2(\jw))\ge\varepsilon \hat{G}_2(\jw)^*\hat{G}_2(\jw)$~$\forall\jw\in\partial\bS$, and thus
$[-G_1,G_2]$ is not uniformly stable for all output strictly passive $G_2$. 
\end{proof}

\begin{remark}
\label{rmkAftprop6}
Another straightforward argument for establishing statement (\ref{prop6}.1) is to note that $G_1$ being passive is necessary
for stability of $[-G_1,G_2]$ to hold over all output strictly passive $G_2$. Hence the same must hold for uniform stability, 
since the latter is a stronger notion than the former. By the same token, $G_1$ being passive is also necessary for uniform
stability of $[-G_1,G_2]$ to hold over all input strictly passive $G_2$.
\end{remark}

One may notice that there is a gap between the necessary condition (\ref{prop6}.1) and the sufficient condition (\ref{prop6}.2). 
The following proposition shows that this gap cannot be closed. 

\begin{proposition}
\label{prop7}
$G_1$ being output strictly passive is not sufficient for uniform stability of $[-G_1,G_2]$ to hold over all output strictly passive 
$G_2$, nor in fact, over all input strictly passive $G_2$. 
\end{proposition}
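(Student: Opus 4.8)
The statement is a non-sufficiency claim, so my plan is to exhibit a single output strictly passive $G_1$ for which uniform stability of $[-G_1,G_2]$ fails over $\mathcal{P}_O$, and a fortiori over $\mathcal{P}_I$. The cleanest witness is the zero system $G_1=0$, which the preliminaries already record as output strictly passive (but not input strictly passive). With $G_1=0$ we have $-G_1=0$, so the interconnection degenerates: reading off \eqref{eq:FB}, $y_1=-G_1u_1=0$, whence $u_2=d_2$ and $y_2=G_2u_2=G_2d_2$. Thus the map $\STwoOne{d_1}{d_2}\mapsto\STwoOne{y_1}{y_2}$ contains $G_2$ as its $d_2\mapsto y_2$ block, giving the gain bound $\|[-G_1,G_2]\|\ge\|G_2\|$. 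Equivalently, one may read this off the closed-loop expression in Remark~\ref{rmk:1}, which reduces to $\mathrm{diag}(0,G_2)$ when the first subsystem is zero.

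Next I would drive this gain to infinity within $\mathcal{P}_I\subset\mathcal{P}_O$. Taking the family $G_2=kI$ with $k>0$, each such $G_2$ is input strictly passive since $\He(kI)=2kI>0$, hence output strictly passive, and $[-G_1,G_2]$ is stable for every fixed $k$ (indeed $(I-G_2(-G_1))^{-1}=I$, so the inverse map is bounded). However, $\|[-G_1,G_2]\|\ge\|G_2\|=k\to\infty$ as $k\to\infty$, so no uniform bound $\gamma$ can satisfy $\sup\|[-G_1,G_2]\|\le\gamma$ over this family. By Definition~\ref{def:US}, $[-G_1,G_2]$ is therefore not uniformly stable over $\mathcal{P}_O$, nor over $\mathcal{P}_I$, even though $G_1$ is output strictly passive.

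Since the single family $\{kI\}_{k>0}$ lies in both $\mathcal{P}_I$ and $\mathcal{P}_O$, the one counterexample settles both assertions of the proposition at once. There is no genuine obstacle beyond bookkeeping: the only points needing care are the membership $0\in\mathcal{P}_O$ (supplied by the preliminaries) and the fact that $\mathcal{P}_O$ contains systems of unbounded induced norm, which is exactly what defeats uniformity. Conceptually, this is the statement that the gap between the necessary condition (\ref{prop6}.1) and the sufficient condition (\ref{prop6}.2) cannot be closed: strengthening the hypothesis on $G_1$ from passivity to output strict passivity still does not buy uniform stability against the entire output-strictly-passive class, precisely because that class admits arbitrarily high-gain elements while the interaction with $G_1=0$ imposes no attenuation.
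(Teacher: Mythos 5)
Your proof is correct and follows essentially the same route as the paper's: both take $G_1=0$ as the output strictly passive witness and exploit the fact that $\mathcal{P}_O$ and $\mathcal{P}_I$ contain systems of arbitrarily large gain, so that $\|[0,G_2]\|=\|G_2\|$ defeats any uniform bound. Your version merely makes the paper's one-line argument explicit by exhibiting the concrete family $G_2=kI$, $k\to\infty$, which is a fine (and slightly more self-contained) way to present the same idea.
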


\begin{proof}
To see this, one simply needs to note that the zero system is output strictly passive, and the sets of all
output strictly passive systems $\mathcal{P}_O$ and input strictly passive systems $\mathcal{P}_I$ both 
contain systems whose gains are arbitrarily large. Therefore $[0, G_2]$ can never be uniformly stable over
$\mathcal{P}_O$ or $\mathcal{P}_I$.
\end{proof}

As the set of output strictly passive systems is contained in the 
set of passive systems, it is clear from Proposition~\ref{prop7} that $G_1$ being passive 
is also not sufficient for uniform stability of $[-G_1,G_2]$ to hold over $\mathcal{P}_O$ or $\mathcal{P}_I$.

Table~\ref{tab2} summarizes the conditions for \emph{robust uniform stability} we have discovered so far. 
\begin{table}
Conditions for robust uniform stability of $[-G_1,G_2]$ over a set of $G_2$.

\vspace{0.1cm}
\label{tab2}
\begin{center}
\begin{tabular}{|l||c|c|c|}
\hline
                              & $G_1\in\mathcal{P}$            & $G_1\in\mathcal{P}_O$           & $G_1\in\mathcal{P}_I$  \\
\hhline{|=||=|=|=|}
$\forall G_2\in\mathcal{P}$   & $\mathrm{N}\cancel{\mathrm{S}}$ & $\mathrm{N}\cancel{\mathrm{S}}$  & $\mathrm{N}\mathrm{S}$   \\
\hline
$\forall G_2\in\mathcal{P}_O$ & $\mathrm{N}\cancel{\mathrm{S}}$ & $\cancel{\mathrm{S}}$            & $\mathrm{S}$     \\
\hline
$\forall G_2\in\mathcal{P}_I$ & $\mathrm{N}\cancel{\mathrm{S}}$           & $\cancel{\mathrm{S}}$            & $\mathrm{S}$  \\
\hline
\end{tabular}
\end{center}
\caption{}
\vspace{-0.25cm}
{\footnotesize
N: the condition in the top row is necessary for stability over the set in the first column.\\
S~$\cancel{\mathrm{S}}$: the condition in the top row (is / is not) sufficient for stability over the set in the first column.
}
\end{table}

\section{Generalizations to infinite-dimensional multipliers} \label{sec: inf_dim}

In this section, we derive a generalization of Theorem~\ref{thm:main3} to the case where the multiplier involved is not restricted to be of finite
dimension. This is then specialized to deriving a couple of interesting results, namely frequency-weighted small-gain and passivity theorems.

Let $\Pi$ be an $(n+m)\times (n+m)$ LTI and bounded self-adjoint operator. Note that unlike Section~\ref{sec: uncertainty}, the multiplier $\Pi$ is
not required to be finite-dimensional here. The following result is in order.

\begin{theorem}
\label{thm:inf_dim}
Consider the feedback interconnected system shown in Figure~\ref{fig:1} with stable subsystems $G_1$ and $G_2$, 
the multiplier $\Pi$ and the sets $\mathcal{G}_i$, $\bar{\mathcal{G}}_i$, $i=1,2$, stated in (\ref{IQC_sets}). 
Suppose $\Pi$ admits a $J$-spectral factorization
\begin{align*}
\Pi = \Psi^*J\Psi:= \TwoTwo{\psi_1}{\psi_2}{\psi_3}{\psi_4}^*\TwoTwo{I}{0}{0}{-I}\TwoTwo{\psi_1}{\psi_2}{\psi_3}{\psi_4}
\end{align*}
such that the following conditions hold:
\begin{itemize}
\item[(\ref{thm:inf_dim}.1)] $\hat{\Psi}, \hat{\psi}_4^{-1} \in \Hinf \cap \cont$ and $\hat{\Psi}(\lambda^*)^T = \hat{\Psi}(\lambda)^*$ for all
  $\lambda \in \pS$;
\item[(\ref{thm:inf_dim}.2)] $\Pi_{11} := \psi_1^*\psi_1 - \psi_3^*\psi_3 \ge 0$;  
\item[(\ref{thm:inf_dim}.3)] $\Pi_{22} := \psi_2^*\psi_2 - \psi_4^*\psi_4 \le 0$. 
\end{itemize}
Then the system $[G_1,G_2]$ is \emph{uniformly} stable for any $G_2\in\bar{\mathcal{G}}_2$ if and only if $G_1\in\mathcal{G}_1$. 
\end{theorem}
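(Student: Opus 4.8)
The plan is to mirror the proof of Theorem~\ref{thm:main3}, replacing each appeal to finite dimensionality of the $J$-spectral factors by the density of $\RHinf$ in the class of transfer functions described in condition (\ref{thm:inf_dim}.1). For \emph{sufficiency}, I would observe that the arguments establishing stability and the explicit gain bound in~\cite{MegRan97,IwaHar98} (and the uniform bound in~\cite{Kho11}) use only boundedness of $\Pi$ together with a $J$-spectral factorization whose factors and $\psi_4^{-1}$ are stable; finite dimensionality of $\Pi$ is never invoked. Hence, when $G_1\in\mathcal{G}_1$, conditions (\ref{thm:inf_dim}.1)--(\ref{thm:inf_dim}.3) already guarantee that $[G_1,G_2]$ is uniformly stable over $\bar{\mathcal{G}}_2$, exactly as in Theorem~\ref{thm:main3}.

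For \emph{necessity}, I would suppose $G_1\notin\mathcal{G}_1$ and reproduce the destabilizing construction of Theorem~\ref{thm:main3}. If $\psi_3G_1+\psi_4$ is not boundedly invertible, then since $\hat{\psi}_i,\hat{G}_1\in\cont$ the singularity is \emph{attained} at some $\tilde{\jw}\in\bS^c$ and $G_2=-\psi_4^{-1}\psi_3$ is the candidate. Otherwise $M:=(\psi_1G_1+\psi_2)(\psi_3G_1+\psi_4)^{-1}$ lies in $\Hinf\cap\cont$ (using $\psi_4^{-1}\in\Hinf\cap\cont$ and $\hat{G}_1\in\RHinf\subset\cont$), so $\|M\|_{\LT}\not<1$ with the supremum attained on $\pS$; the small-gain construction of~\cite[Theorem~9.1]{ZDG96} then yields a \emph{real rational} $\Delta$, $\|\Delta\|_{\LT}\le 1$, making $I-\hat{\Delta}\hat{M}$ singular at some $\jw\in\pS$. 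Forming $\Delta_\rho=\rho\Delta$ and $G_{2\rho}$ as in~\eqref{G2r} gives, exactly as before, $\|[G_1,G_{2\rho}]\|\to\infty$ as $\rho\to1$ together with $q_{\Pi}^c(G_{2\rho})=\zeta_1^*(I-\rho^2\Delta^*\Delta)\zeta_1\ge 0$. Because sums, products and inverses preserve both membership in $\Hinf\cap\cont$ and the symmetry $\hat{X}(\lambda^*)^T=\hat{X}(\lambda)^*$, the operator $G_{2\rho}$ itself satisfies the hypotheses of the rational-approximation lemma~\cite[Lemma~A.6.11]{CurZwa95}.

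The remaining and \emph{principal} obstacle is that $G_{2\rho}$ is now infinite-dimensional, whereas $\bar{\mathcal{G}}_2$ consists only of \emph{real rational} systems. I would therefore approximate $G_{2\rho}$ in $\|\cdot\|_\infty$ by some $\tilde{G}_2\in\RHinf$ and argue that, for the approximation sufficiently fine, (i) $\tilde{G}_2\in\bar{\mathcal{G}}_2$ and (ii) $\|[G_1,\tilde{G}_2]\|$ remains as large as desired. Both $G_2\mapsto q_{\Pi}^c(G_2)$ and the closed-loop gain $G_2\mapsto\|[G_1,G_2]\|$ are continuous in $\|\cdot\|_\infty$ wherever the latter is finite, so (ii) and an \emph{approximate} version of (i) are immediate; the genuine difficulty is securing $q_{\Pi}^c(\tilde{G}_2)\ge 0$ \emph{exactly}. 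This is delicate precisely because condition (\ref{thm:inf_dim}.3) only gives $\Pi_{22}\le 0$, so the positivity margin $q_{\Pi}^c(G_{2\rho})\ge(1-\rho^2)\zeta_1^*\zeta_1$ degenerates at frequencies where $\zeta_1=\psi_1+\psi_2G_{2\rho}$ loses rank, and a blind $\|\cdot\|_\infty$ perturbation can push $q_{\Pi}^c$ negative there.

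To overcome this I would exploit that $\Pi_{22}\le 0$ renders $q_{\Pi}^c$ concave in its argument, so that $\bar{\mathcal{G}}_2$ is convex: fixing a strictly feasible direction and sliding $G_{2\rho}$ a small amount toward it produces, for each $\rho$, an infinite-dimensional system with a \emph{uniform} margin $q_{\Pi}^c\ge\delta I$, which can then be approximated within $\delta/C$ by a real rational $\tilde{G}_2$ that provably stays in $\bar{\mathcal{G}}_2$. Letting the slide amount and the approximation error tend to zero while $\rho\to1$ yields a sequence $\{\tilde{G}_2\}\subset\bar{\mathcal{G}}_2$ with $\|[G_1,\tilde{G}_2]\|\to\infty$, contradicting uniform stability. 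I expect the verification that such a strictly feasible direction exists (equivalently, that $\bar{\mathcal{G}}_2$ admits a suitable interior or a local substitute at the degenerate frequencies) to be the most technical point of the argument.
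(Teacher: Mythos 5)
Your proposal follows essentially the same route as the paper's proof. Sufficiency is delegated to~\cite{MegRan97,IwaHar98} (with the uniform bound from~\cite{Kho11}), exactly as the paper does; necessity reuses the construction of $G_{2\rho}$ in~\eqref{G2r} from Theorem~\ref{thm:main3}, followed by rational approximation via~\cite[Lemma A.6.11]{CurZwa95} and a perturbation estimate keeping the closed-loop gain large. The paper carries out precisely this last step: it expands the difference $(I-G_{2\rho}^rG_1)^{-1}G_{2\rho}^r-(I-G_{2\rho}G_1)^{-1}G_{2\rho}$, bounds it by $\|G_{2\rho}^r-G_{2\rho}\|\left(n_1(\rho)+n_1(\rho)n_2(\rho)\right)$, and concludes that $\|[G_1,G_{2\rho}^r]\|\ge\|(I-G_{2\rho}G_1)^{-1}G_{2\rho}\|-c$, which diverges as $\rho\to 1$; this matches your continuity argument for the closed-loop gain.

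Where you part ways with the paper is the step you yourself single out: securing exact membership $q_{\Pi}^c(G_{2\rho}^r)\ge 0$. The paper does this with no convexification at all: it invokes $q_{\Pi}^c(G_{2\rho})>0$ --- which in this paper's operator convention already encodes a uniform margin $q_{\Pi}^c(G_{2\rho})\ge\epsilon I$ --- as asserted in the proof of Theorem~\ref{thm:main3}, and then uses continuity of the quadratic form to conclude $q_{\Pi}^c(G_{2\rho}^r)>0$ once $\|G_{2\rho}^r-G_{2\rho}\|$ is small enough. Your observation that this margin can degenerate is well taken: the injectivity condition (1.2) of Theorem~\ref{thm:main1} is not assumed in Theorems~\ref{thm:main3} and~\ref{thm:inf_dim}, so nothing in the hypotheses forces $\psi_1-\psi_2\psi_4^{-1}\psi_3$, and hence $\zeta_1$, to be bounded below, and the inequality $q_{\Pi}^c(G_{2\rho})\ge(1-\rho^2)\zeta_1^*\zeta_1$ then yields only semi-definiteness; the paper glosses over this point. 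However, your proposed repair is not a complete proof either: the concavity/sliding argument needs a strictly feasible $H_0$ with $q_{\Pi}^c(H_0)\ge\delta_0 I$, which you do not produce and which need not exist under (\ref{thm:inf_dim}.2)--(\ref{thm:inf_dim}.3) alone (e.g. $\Psi=\STwoTwo{I}{I}{I}{I}$ satisfies all the hypotheses and gives $\Pi=0$, for which $\mathcal{G}_2=\emptyset$); the natural candidate $-\psi_4^{-1}\psi_3$ exhibits exactly the same degeneracy, by~\eqref{ineq:main2}. (A similar remark applies to your first case: $-\psi_4^{-1}\psi_3$ is generally irrational there too, so it must also be approximated, which neither you nor the paper spell out.) In short, if you grant yourself the margin that the paper asserts, your extra machinery is unnecessary and your argument collapses to the paper's; if you do not, your proof remains incomplete exactly at the point you flagged --- as, arguably, does the paper's.
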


\begin{proof}
  The proof is largely similar to that of Theorem~\ref{thm:main3}, with the exception that in the necessity direction one would 
  need to employ the argument that any transfer function matrix $\hat{X} \in \Hinf \cap \cont$ satisfying 
  $\hat{X}(\lambda^*)^T = \hat{X}(\lambda)^*$ can be approximated arbitrarily closely in $\|\cdot\|_\infty$ by elements in $\RHinf$.

  More specifically, note that $G_{2\rho}$ as defined in~(\ref{G2r}), though satisfies $q_{\Pi}^c(G_{2\rho})>0$ ($\forall~0<\rho<1$), does not
  necessarily belong to $\bar{\mathcal{G}}_2$ because $\psi_i$, $i=1,\cdots,4$, may not be rational. To complete the remaining steps of the proof, let
  $G_{2\rho}^r$ belong to $\RHinf$ and note that
  \begin{align*}
  &(I-G_{2\rho}^rG_1)^{-1}G_{2\rho}^r-(I-G_{2\rho}G_1)^{-1}G_{2\rho}\\
  &=(I-G_{2\rho}^rG_1)^{-1}(G_{2\rho}^r-G_{2\rho}) + \\
  & \ \ (I-G_{2\rho}^rG_1)^{-1}(G_{2\rho}^r-G_{2\rho})G_1(I-G_{2\rho}G_1)^{-1}G_{2\rho}.
  \end{align*}
  Thus, 
  \begin{align*}
  \|(I-G_{2\rho}^rG_1)^{-1}G_{2\rho}^r-(I-G_{2\rho}G_1)^{-1}G_{2\rho}\| \le \\ 
  \|G_{2\rho}^r-G_{2\rho}\|\left(n_1(\rho)+n_1(\rho)n_2(\rho)\right),
  \end{align*}
  where $n_1(\rho):=\|(I-G_{2\rho}^rG_1)^{-1}\|$ and $n_2(\rho):=\|G_1(I-G_{2\rho}G_1)^{-1}G_{2\rho}\|$. 
  The above inequality implies that, given any $\rho\in(0,1)$ and $G_{2\rho}$ defined in~(\ref{G2r}), one can find 
  $G_{2\rho}^r\in\RHinf$ such that $\|G_{2\rho}^r-G_{2\rho}\|$ is sufficiently small and
  \begin{align*}
  \|(I-G_{2\rho}^rG_1)^{-1}G_{2\rho}^r\|\ge \|(I-G_{2\rho}G_1)^{-1}G_{2\rho}\|-c,
  \end{align*}
  where $c$ is a constant that upper bounds 
\[
\|G_{2\rho}^r-G_{2\rho}\|\left(n_1(\rho)+n_1(\rho)n_2(\rho)\right).
\] 
Note that such a constant exists because $\|G_{2\rho}^r-G_{2\rho}\|=\|\hat{G}_{2\rho}^r-\hat{G}_{2\rho}\|_{\infty}$ can be made arbitrarily small for
any given $G_{2\rho}$.  Finally, notice that since $q_{\Pi}^{c}(\cdot)$ is quadratic and hence continuous, $q_{\Pi}^{c}(G_{2\rho})>0$ implies 
$q_{\Pi}^{c}(G^r_{2\rho}) > 0$ whenever $\|G_{2\rho}^r-G_{2\rho}\|$ is sufficiently small. Thus, we have shown that for every $G_{2\rho}$ constructed in
Theorem~\ref{thm:main3}, one can find a $G_{2\rho}^r\in\bar{\mathcal{G}}_2$ such that $\|[G_1,G_{2\rho}^r]\|$ is lower bounded by
$\|(I-G_{2\rho}G_1)^{-1}G_{2\rho}\|-c$, which diverges to infinity as $\rho\rightarrow 1$.
\end{proof}

\subsection{Frequency-weighted small-gain theorem}

The following result is a generalization of the well-known small-gain theorem.

\begin{proposition}
  Let $\hat{\psi}_1 \in \Hinf \cap \cont$ be a scalar function satisfying $\hat{\psi}_1(\lambda^*) = \hat{\psi}_1(\lambda)^*$ for all
  $\lambda \in \pS$, and $\hat{\gamma}(\lambda) := |\hat{\psi}_1(\lambda)|^2$. Then given $\hat{G}_1 \in \RHinf$, $[G_1, G_2]$ is uniformly stable
  over all $\hat{G}_2 \in \RHinf$ satisfying
\[
\bar{\sigma}(\hat{G}_2(\lambda)) \leq \hat{\gamma}(\lambda) \quad \forall \lambda \in \pS
\]
if and only if 
\[
\bar{\sigma}(\hat{G}_1(\lambda)) < \frac{1}{\hat{\gamma}(\lambda)} \quad \forall \lambda \in \pS.
\]
\end{proposition}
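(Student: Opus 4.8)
The plan is to obtain this as a direct specialization of Theorem~\ref{thm:inf_dim}, choosing a multiplier whose associated sets $\mathcal{G}_1$ and $\bar{\mathcal{G}}_2$ reproduce exactly the two frequency-weighted norm bounds. Since the setting is scalar ($n=m=1$), I would propose the $J$-spectral factor
\[
\Psi = \TwoTwo{\hat{\psi}_1^2}{0}{0}{1},
\]
which yields the (self-adjoint) multiplier
\[
\Pi = \Psi^*J\Psi = \TwoTwo{|\hat{\psi}_1|^4}{0}{0}{-1} = \TwoTwo{\hat{\gamma}^2}{0}{0}{-1},
\]
upon using $\hat{\gamma}=|\hat{\psi}_1|^2$. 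Note that the top-left factor is the \emph{square} of the given $\hat{\psi}_1$; squaring, rather than using $\hat{\psi}_1$ itself, is precisely what lets the bottom factor be the constant $1$, so that no invertibility of $\hat{\psi}_1$ in $\Hinf$ need be assumed.

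First I would check the hypotheses of Theorem~\ref{thm:inf_dim}. Condition (\ref{thm:inf_dim}.1) is immediate: $\hat{\psi}_1 \in \Hinf \cap \cont$ forces $\hat{\psi}_1^2 \in \Hinf \cap \cont$, the bottom factor $1$ and its inverse lie trivially in $\Hinf \cap \cont$, and the para-Hermitian symmetry $\hat{\Psi}(\lambda^*)^T=\hat{\Psi}(\lambda)^*$ follows by squaring the hypothesis $\hat{\psi}_1(\lambda^*)=\hat{\psi}_1(\lambda)^*$. Conditions (\ref{thm:inf_dim}.2) and (\ref{thm:inf_dim}.3) reduce to $\Pi_{11}=\hat{\gamma}^2\geq 0$ and $\Pi_{22}=-1\leq 0$, both obvious.

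Next I would identify the two sets. Since $\Pi_{12}=0$, one has $q_{\Pi}^c(G_2)=\Pi_{11}+\hat{G}_2^*\Pi_{22}\hat{G}_2=\hat{\gamma}^2-\hat{G}_2^*\hat{G}_2$, so $G_2\in\bar{\mathcal{G}}_2$ (i.e.\ $q_{\Pi}^c(G_2)\geq 0$) is exactly $\bar{\sigma}(\hat{G}_2(\lambda))\leq\hat{\gamma}(\lambda)$ for all $\lambda\in\pS$; likewise $q_{\Pi}(G_1)=\hat{G}_1^*\Pi_{11}\hat{G}_1+\Pi_{22}=\hat{\gamma}^2\hat{G}_1^*\hat{G}_1-1$, so $G_1\in\mathcal{G}_1$ is $\bar{\sigma}(\hat{G}_1(\lambda))<1/\hat{\gamma}(\lambda)$. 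With these two identifications the claimed equivalence becomes precisely the conclusion of Theorem~\ref{thm:inf_dim}.

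The step demanding the most care is the translation of the operator inequality $q_{\Pi}(G_1)<0$ into the pointwise bound. By the coercive meaning of ``$<0$'' adopted in this paper, $q_{\Pi}(G_1)<0$ asserts $\hat{\gamma}^2|\hat{G}_1|^2-1\leq -\epsilon$ uniformly on $\pS$ for some $\epsilon>0$, whereas the proposition states the pointwise strict inequality $\bar{\sigma}(\hat{G}_1(\lambda))<1/\hat{\gamma}(\lambda)$. These are equivalent because $\hat{\gamma}$ and $\hat{G}_1$ are continuous on the compact set $\pS$ (with the point at infinity adjoined in the CT case), so a strict pointwise inequality is automatically uniform; at any frequency where $\hat{\gamma}$ vanishes one reads $1/\hat{\gamma}=\infty$, the constraint on $\hat{G}_1$ is then vacuous, and indeed $q_{\Pi}(G_1)=-1<0$ there regardless. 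The non-strict identification for $\bar{\mathcal{G}}_2$ is direct and needs no such argument.
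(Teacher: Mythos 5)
Your route is the same as the paper's: specialize Theorem~\ref{thm:inf_dim} to a diagonal $J$-spectral factor so that $\mathcal{G}_1$ and $\bar{\mathcal{G}}_2$ become precisely the two weighted gain balls. The one substantive difference is the weight you put in the factor. The paper takes $\Psi = \STwoTwo{\psi_1 I}{0}{0}{I}$, hence $\Pi = \STwoTwo{\gamma I}{0}{0}{-I}$, under which $q_{\Pi}^c(G_2) = \gamma I - G_2^*G_2 \ge 0$ translates to $\bar{\sigma}(\hat{G}_2(\lambda))^2 \le \hat{\gamma}(\lambda)$, i.e.\ $\bar{\sigma}(\hat{G}_2(\lambda)) \le |\hat{\psi}_1(\lambda)|$, and likewise $q_{\Pi}(G_1)<0$ gives $\bar{\sigma}(\hat{G}_1(\lambda)) < 1/|\hat{\psi}_1(\lambda)|$; these match the stated proposition only if $\hat{\gamma}$ were defined as $|\hat{\psi}_1|$ rather than $|\hat{\psi}_1|^2$. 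Your squared factor $\Psi_{11} = \hat{\psi}_1^2$ yields $\Pi_{11} = \hat{\gamma}^2$ and hence exactly the sets $\left\{\bar{\sigma}(\hat{G}_2) \le \hat{\gamma}\right\}$ and $\left\{\bar{\sigma}(\hat{G}_1) < 1/\hat{\gamma}\right\}$ appearing in the statement, so your computation is the one consistent with the proposition as written; in effect it repairs a square-root mismatch in the paper's own one-line proof. Your explicit treatment of the coercive-versus-pointwise reading of $q_{\Pi}(G_1)<0$ (continuity on the compactified $\pS$, plus the observation that frequencies where $\hat{\gamma}$ vanishes cause no harm) is also a genuine addition needed for the ``if'' direction, which the paper leaves implicit. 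One aside of yours is wrong, though inconsequential: squaring has nothing to do with avoiding invertibility of $\hat{\psi}_1$ --- the $(2,2)$ factor equals $1$ whether or not you square; what squaring buys is matching $\Pi_{11}$ to $\hat{\gamma}^2$.

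The one genuine gap is your opening assumption that $n=m=1$. The proposition only requires the \emph{weight} $\hat{\psi}_1$ to be scalar; $\hat{G}_1$ and $\hat{G}_2$ are arbitrary (generally matrix-valued, not necessarily square) elements of $\RHinf$, which is why the bounds are phrased with $\bar{\sigma}$. As written, your argument establishes only the single-input single-output case. The repair is immediate, and is exactly what the paper's identity blocks accomplish: take $\Psi = \STwoTwo{\hat{\psi}_1^2 I_n}{0}{0}{I_m}$, so that $\Pi = \STwoTwo{\hat{\gamma}^2 I_n}{0}{0}{-I_m}$, $q_{\Pi}^c(G_2) = \hat{\gamma}^2 I_n - G_2^*G_2$ and $q_{\Pi}(G_1) = \hat{\gamma}^2 G_1^*G_1 - I_m$; every step of your verification of conditions (\ref{thm:inf_dim}.1)--(\ref{thm:inf_dim}.3), your set identifications, and your compactness argument then carries over verbatim.
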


\begin{proof}
The claim follows from Theorem~\ref{thm:inf_dim} by taking $\Pi = \TwoTwo{\gamma I}{0}{0}{-I}$ and $\Psi = \TwoTwo{\psi_1I}{0}{0}{I}$.
\end{proof}

To recover the standard small-gain theorem, simply take $\psi_1 = 1$.

\subsection{Frequency-weighted passivity theorem}

The next result is a generalization of the well-known passivity theorem.

\begin{proposition}
  Let $\theta : \pS \to (-\frac{\pi}{2}, \frac{\pi}{2})$ be a continuous function and $\theta(\lambda^*) = -\theta(\lambda)$ for all $\lambda \in
  \pS$. Then given a square $\hat{G}_1 \in \RHinf$, $[G_1, G_2]$ is uniformly stable over all $\hat{G}_2 \in \RHinf$ satisfying
\[
e^{\jj\theta(\lambda)} \hat G_2(\lambda) + e^{-\jj\theta(\lambda)} \hat G_2(\lambda)^* \geq 0 \quad \forall \lambda \in \pS
\] 
if and only if 
\[
e^{-\jj\theta(\lambda)} \hat G_1(\lambda) + e^{\jj\theta(\lambda)} \hat G_1(\lambda)^* < 0 \quad \forall \lambda \in \pS. 
\]
\end{proposition}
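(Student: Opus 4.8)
The plan is to apply Theorem~\ref{thm:inf_dim} with a multiplier whose off-diagonal block encodes the phase $e^{\j\theta}$, exploiting the fact that the sets $\mathcal{G}_1$ and $\bar{\mathcal{G}}_2$ are invariant under multiplication of $\Pi$ by a positive scalar function. Concretely, I would first secure a scalar function $w$ with $w,w^{-1}\in\Hinf\cap\cont$, enjoying the conjugate symmetry $w(\lambda^*)=\overline{w(\lambda)}$, and with boundary argument $\arg w(\lambda)=\theta(\lambda)$ for all $\lambda\in\pS$. Writing $w=|w|\,e^{\j\theta}$, I then set $\Pi=\TwoTwo{0}{wI}{\bar{w}I}{0}$, which is bounded, LTI and self-adjoint.

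With this choice the two quadratic forms reduce pointwise to $q_{\Pi}(\hat G_1)=|w|\,(e^{-\j\theta}\hat G_1+e^{\j\theta}\hat G_1^*)$ and $q_{\Pi}^c(\hat G_2)=|w|\,(e^{\j\theta}\hat G_2+e^{-\j\theta}\hat G_2^*)$. Since $|w|>0$ at every $\lambda\in\pS$, the magnitude of $w$ is irrelevant to the sign of these forms, so $\mathcal{G}_1$ is exactly $\{G_1:e^{-\j\theta}\hat G_1+e^{\j\theta}\hat G_1^*<0\}$ and $\bar{\mathcal{G}}_2$ is exactly $\{G_2:e^{\j\theta}\hat G_2+e^{-\j\theta}\hat G_2^*\ge0\}$, as required. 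It is precisely this scale invariance that gives me the freedom to fix the \emph{magnitude} of $w$ so as to make $w$ analytic, while keeping its \emph{argument} pinned to $\theta$.

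Next I would exhibit the $J$-spectral factorization by taking $\Psi=\TwoTwo{\tfrac{1}{\sqrt2}I}{\tfrac{1}{\sqrt2}wI}{\tfrac{1}{\sqrt2}I}{-\tfrac{1}{\sqrt2}wI}$. A direct block computation gives $\Pi_{11}=\psi_1^*\psi_1-\psi_3^*\psi_3=0$, $\Pi_{22}=\psi_2^*\psi_2-\psi_4^*\psi_4=0$, and $\Pi_{12}=\psi_1^*\psi_2-\psi_3^*\psi_4=wI$, so that $\Pi=\Psi^*J\Psi$. Conditions (\ref{thm:inf_dim}.2) and (\ref{thm:inf_dim}.3) hold trivially since $\Pi_{11}=0\ge0$ and $\Pi_{22}=0\le0$; condition (\ref{thm:inf_dim}.1) holds because $w,w^{-1}\in\Hinf\cap\cont$ place $\hat\Psi$ and $\hat\psi_4^{-1}=-\sqrt2\,w^{-1}I$ in $\Hinf\cap\cont$, while $w(\lambda^*)=\overline{w(\lambda)}$ yields $\hat\Psi(\lambda^*)^T=\hat\Psi(\lambda)^*$. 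Theorem~\ref{thm:inf_dim} then delivers the claimed equivalence.

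The crux is the construction of $w$, which I expect to require the most care. I would use that $\theta\in(-\tfrac{\pi}{2},\tfrac{\pi}{2})$ keeps $e^{\j\theta}$ strictly inside the right half plane with zero winding, so a single-valued logarithm is available; I then build the function $v$, analytic in $\bS^c$, whose boundary imaginary part is $\theta$ and whose boundary real part is the harmonic conjugate $\tilde\theta$, and put $w:=e^{v}$, giving $\arg w=\theta$ and $|w|=e^{\tilde\theta}$. The symmetry $\theta(\lambda^*)=-\theta(\lambda)$ forces $\tilde\theta$ to be even and hence $w(\lambda^*)=\overline{w(\lambda)}$. The delicate point is guaranteeing $w,w^{-1}\in\Hinf\cap\cont$, i.e.\ that $\tilde\theta$ is bounded and continuous; this is a regularity statement about the conjugate function of $\theta$, and for a merely continuous $\theta$ it may call for a slightly stronger modulus of continuity, or an approximation argument realizing $w$ as a uniform limit of admissible functions.
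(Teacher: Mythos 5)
Your proposal is correct and takes essentially the same route as the paper: the paper's own proof is a one-line application of Theorem~\ref{thm:inf_dim} with the pure-phase multiplier $\Pi = \STwoTwo{0}{e^{\mathrm{j}\theta}I}{e^{-\mathrm{j}\theta}I}{0}$ and the factor $\Psi = \frac{1}{\sqrt{2}}\STwoTwo{e^{\mathrm{j}\theta_1}I}{e^{\mathrm{j}\theta_2}I}{e^{\mathrm{j}\theta_1}I}{-e^{\mathrm{j}\theta_2}I}$, where $\theta_1,\theta_2\in\Hinf\cap\cont$ are required to satisfy $\theta_1(\lambda^*)=-\theta_1(\lambda)^*$, $\theta_2(\lambda^*)=-\theta_2(\lambda)^*$, and $\theta_2(\lambda)-\theta_1(\lambda)^*=\theta(\lambda)$ on $\pS$. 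Your data are interchangeable with the paper's: from $\theta_1,\theta_2$ one obtains your $w:=e^{\mathrm{j}(\theta_2-\theta_1)}$, and from your $w$ one obtains $\theta_1:=\frac{\mathrm{j}}{2}\log w$ and $\theta_2:=-\frac{\mathrm{j}}{2}\log w$ (the logarithm is admissible because $w$ is outer with boundary argument confined to a proper subsector of the right half-plane); the two multipliers then differ only by the positive, bounded-above-and-below factor $|w|$, which, as you observe, leaves $\mathcal{G}_1$ and $\bar{\mathcal{G}}_2$ unchanged. Concerning the one step you flag as delicate --- the existence of $w$, i.e.\ boundedness and continuity of the harmonic conjugate of $\theta$ --- the paper is no more rigorous: it simply posits that ``any two'' such $\theta_1,\theta_2$ exist, and that existence question is mathematically identical to yours. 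So your attempt matches the published argument, and is, if anything, more explicit about the only nontrivial analytic point.
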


\begin{proof}
  The claim follows from Theorem~\ref{thm:inf_dim} by taking by taking 
\[
\Pi = \TwoTwo{0}{e^{\jj\theta} I}{e^{-\jj\theta} I}{0} \text{ and } 
  \Psi = \frac{1}{\sqrt{2}}\TwoTwo{e^{\jj\theta_1}I}{e^{\jj\theta_2}I}{e^{\jj\theta_1}I}{-e^{\jj\theta_2}I}, 
\]
where $\theta_1, \theta_2 \in \mathcal{H}_\infty \cap \mathcal{C}$ are any two functions that satisfy $\theta_1(\lambda^*) = -\theta_1(\lambda)^*$,
$\theta_2(\lambda^*) = -\theta_2(\lambda)^*$, and also $\theta_2(\lambda) - \theta_1(\lambda)^* = \theta(\lambda)$ for all $\lambda \in \pS$.
\end{proof}

To recover the standard passivity theorem, simply take $\theta = 0$.






\section{Conclusions} \label{sec: conclusion}

This paper established multiple versions of converse integral quadratic constraint (IQC) results within the linear time-invariant setting. They
involve both closed-loop stability and uniform closed-loop stability, in conjunction with various requirements on the multipliers defining the
corresponding IQCs. These results corroborate the utility of IQCs in robustness analysis by demonstrating that such analysis is not conservative
provided that the feedback system is required to be robustly stable against all uncertainties described by a certain IQC. The IQC results were then
specialized to derive several converse passivity theorems for multivariable transfer functions, which have implications in control systems interacting
with unknown but passive environment (e.g. robotics). Generalized small-gain and passivity theorems with frequency weighting functions were also
established based on an extension of a converse IQC result.

Future work may involve seeking converse results for linear time-varying state-space systems and large-scale interconnected networks. Converse results
on classes of negative imaginary systems~\cite{Ang06, PetLan10} and systems manifesting mixed small-gain, passivity, and negative imaginariness across
frequencies are also worth investigating. Examining converse IQC conditions that only hold on segments of the frequency axis in the spirit of the
generalized Kalman-Yakubovich-Popov lemma~\cite{IwaHar95} is another interesting direction.

\section{Acknowledgement}
\label{sec:ack}
The authors would like to express their gratitude to Prof. dr. Arjan van der Schaft of the University of Groningen, the Netherlands, 
for his encouragement and constructive comments that helped better this manuscript.

\vspace{0.5cm}
\noindent
{\large\bf Addendum to ``Converse Theorems for Integral Quadratic Constraints''}

\vspace{0.25cm}

\begin{abstract}
Consider a linear time-invariant feedback system consisting of two open-loop stable subsystems. In this note, 
it is established that if such feedback system is stable for one subsystem being any arbitrarily passive system, then
the other subsystem must be output strictly passive.
\end{abstract}

\emph{keywords: } 
  robust stability, integral quadratic constraints, passivity, small-gain.

\section{Introduction}

This addendum is intended to fill in the missing entry from Table 1 of~\cite{KhoKao21} (which also appears in 
the conference paper~\cite{KKS20}). Specifically, the bold alphabet \textbf{N} contained in
the table below is novel and established herein. It corresponds to the result that the feedback system $[-G_1, G_2]$ being stable for all stable passive
$G_2$ implies $G_1$ is output strictly passive. In~\cite{KKS20}, it is already noted that such necessity condition holds when $G_1$ and $G_2$ are 
single-input-single-output. Here we show that the condition holds in the general multiple-input-multiple-output case. 

\begin{table}[h]
\begin{center}
\begin{tabular}{|l|c|c|c|}
  \hline
  & $G_1\in\mathcal{P}$   & $G_1\in\mathcal{P}_O$          & $G_1\in\mathcal{P}_I$  \\
  \hline 
  $\forall G_2\in\mathcal{P}$   & $\mathrm{N}\cancel{\mathrm{S}}$           &   $\boldsymbol{\mathrm{N}}\mathrm{S}$        & $\cancel{\mathrm{N}}\mathrm{S}$   \\
  \hline
  $\forall G_2\in\mathcal{P}_O$ & $\mathrm{N}\mathrm{S}$ & $\cancel{\mathrm{N}}\mathrm{S}$ & $\cancel{\mathrm{N}}\mathrm{S}$     \\
  \hline
  $\forall G_2\in\mathcal{P}_I$ & $\mathrm{N}\mathrm{S}$ & $\cancel{\mathrm{N}}\mathrm{S}$ & $\cancel{\mathrm{N}}\mathrm{S}$  \\
  \hline
\end{tabular}
\end{center}
\caption{Conditions on robust stability of $[-G_1,G_2]$ over a set of $G_2$.}
{\footnotesize
N~/~$\cancel{\mathrm{N}}$: the condition in the top row is (necessary~/~not necessary) for robust stability over the set in the first column.\\
S~/~$\cancel{\mathrm{S}}$: the condition in the top row is (sufficient~/~not sufficient) for robust stability over the set in the first column.
}
\label{tab11}
\end{table}

\section{Notation and Preliminaries}

The notation and terminology used in this note is more or less standard in systems theory literature, and exactly the same as in~\cite{KhoKao21}. Here
we simply mention those not covered in~~\cite{KhoKao21}.
A matrix $M \in \Complex^{n \times n}$ is said to be passive if its Hermitian part is positive semi-definite; i.e., $M + M^* \geq 0$. Moreover, $M$ is 
output strictly passive if there exists $\epsilon > 0$ such that $M + M^* \geq \epsilon M^*M$. The rank of $M$ is denoted as $\mathrm{rank}(M)$.

Given matrices
$M \in \Complex^{n \times n}$ and $N \in \Complex^{n \times n}$, $M$ is said to be \emph{Hermitian congruent} to $N$ if there exists non-singular $T \in \Complex^{n \times n}$ such that $M = TNT^*$. If, in addition, $M$, $N$, and $T$ are real, then $M$ is said to be \emph{congruent} to $N$. 
Given $k$-tuple $\theta^{(k)}:=(\theta_1,\theta_2,\cdots,\theta_k)$, we define 
\[
U(\theta^{(k)}) := \begin{bmatrix}
e^{j \theta_1} & & 0 \\
& \ddots & \\
0 & & e^{j \theta_{k}} 
\end{bmatrix}
\]
and
\[
V(\theta^{(k)}) := \begin{bmatrix}
\STwoTwo{\cos \theta_1}{\sin \theta_1}{-\sin \theta_1}{\cos \theta_1} & & 0 \\
& \ddots & \\
0 & &  \STwoTwo{\cos \theta_{k}}{\sin \theta_{k}}{-\sin \theta_{k}}{\cos \theta_{k}}
\end{bmatrix}.
\]
We have the following lemmas. 
\begin{lemma}[\cite{DokIkr02}]
\label{lem:1}
Given (real) matrices $M \in \Real^{n \times n}$ and $N \in \Real^{n \times n}$, if $M$ and $N$ are Hermitian congruent, then they are also congruent.
\end{lemma}

\begin{lemma}[\cite{JohFur01, FurJoh03}]
\label{lem:2}
Let $M \in \Complex^{n \times n}$.  $M$ is passive if and only if $M$ is Hermitian congruent to
\begin{align}
N_{\rm C} := 
\begin{bmatrix}
U(\theta^{(k)}) & 0 & 0 \\
0 & D_{\ell} & 0 \\
0 & 0 & \mathbf{0}_{n-k-2\ell}
\end{bmatrix},
\label{eq:NC}
\end{align}
where $-\frac{\pi}{2} \leq \theta_{k} \leq \dots \leq \theta_1 \leq \frac{\pi}{2}$, $\ell$ is a nonnegative integer satisfying
$k+2\ell = \mathrm{rank}(M)$, $D_{\ell}$ is a direct sum of $\ell$ copies of the block $\STwoTwo{1}{2}{0}{1}$, and $\mathbf{0}_{n-k-2\ell}$ is
$(n-k-2\ell)\times(n-k-2\ell)$ zero matrix.
\end{lemma}

\begin{lemma} \label{lem: not_OSP}
$M \in \Complex^{n \times n}$ is passive but not output strictly passive if and only if it is 
Hermitian congruent to the $N_{\rm C}$ matrix defined in~\eqref{eq:NC} with 
at least one of the following conditions satisfied: $\ell > 0$, $\theta_1 = \frac{\pi}{2}$, $\theta_k = -\frac{\pi}{2}$.
\end{lemma}

\begin{proof}
It follows from Lemma~\ref{lem:2} that $M$ is passive if and only if $M + M^*$ is Hermitian congruent to
\[
N_{\rm C}+N_{\rm C}^* = \begin{bmatrix}
U(\theta^{(k)}) + U(\theta^{(k)})^* & 0 & 0 \\
0 & F_{\ell} & 0 \\
0 & 0 & \mathbf{0}_{n-k-2\ell}
\end{bmatrix},
\]
where $F_\ell$ is a direct sum of $\ell$ copies of block $\STwoTwo{2}{2}{2}{2}$. Let $r := \mathrm{rank}(M) = \mathrm{rank}(M^*M) = k + 2\ell$. Note
that $\mathrm{rank}(N_{\rm C}+N_{\rm C}^*) = \mathrm{rank}(M + M^*) < r$ if and only if at least one of the following conditions holds: 
$\ell > 0$, $\theta_1 = \frac{\pi}{2}$, and $\theta_k = -\frac{\pi}{2}$. Moreover, $\mathrm{rank}(M + M^*) < r$ 
is equivalent to that there exists some non-zero vector $v$ such that $Mv \not = 0$ and $(M+M^*)v =0$, which in turn means that 
there can be no $\epsilon > 0$ such that $M + M^* \geq \epsilon M^*M$. The last statement is equivalent to $M$ being not output strictly passive.
\end{proof}

\begin{lemma} \label{lem: real_congr}
A passive (real) $M \in \Real^{n \times n}$ is congruent to
\begin{align}
\label{eq:NR}
N_{\rm R}:=\begin{bmatrix}
V(\theta^{(k)}) & 0 & 0 \\
0 & D_\ell & 0 \\
0 & 0 & \mathbf{0}_{n - 2k - 2\ell}
\end{bmatrix}
\end{align}
where $0 \leq \theta_{k} \leq \dots \leq \theta_1 \leq \frac{\pi}{2}$, $D_{\ell}$ is a direct sum of $\ell \geq 0$ copies of the block
$\STwoTwo{1}{2}{0}{1}$, and $2k + 2\ell = \mathrm{rank}(M)$. Moreover, $M$ is passive but not output strictly passive if and only if 
it is congruent to the $N_{\rm R}$ matrix defined in~\eqref{eq:NR} with 
at least one of the following conditions satisfied: $\ell > 0$ , $\theta_1 = \frac{\pi}{2}$.
\end{lemma}

\begin{proof}
By Lemma~\ref{lem:2}, $M$ is Hermitian congruent to $N_{\rm C}$ defined in (\ref{eq:NC}). Since $M$ is real, $e^{j\theta}$ is an eigenvalue 
of $M$ if and only if $e^{-j\theta}$ is. Moreover, since $\STwoTwo{e^{j\theta}}{0}{0}{e^{-j\theta}}$ is Hermitian congruent to $\STwoTwo{\cos \theta}{\sin \theta}{-\sin \theta}{\cos \theta}$, we can further conclude that $M$ is Hermitian congruent to $N_{\rm R}$. Finally, by 
Lemma~\ref{lem:1}, we conclude that $M$ is congruent to $N_{\rm R}$. The second part of the claim follows the same arguments as in Lemma~\ref{lem: not_OSP}.
\end{proof}

\section{Main result}

\begin{proposition}
Consider the feedback system $[-G_1,G_2]$ shown in Figure~1 of~\cite{KhoKao21}, where the subsystems $G_1$ and $G_2$ are square and stable.
It holds that $[-G_1,G_2]$ is stable for any passive $G_2$ only if $G_1$ is output strictly passive.
\end{proposition}

\begin{proof}
  First note that by~\cite[Rem. 6]{KhoKao21}, $G_1$ is necessarily passive.  Suppose to the contrapositive that $G_1$ is passive but not output
  strictly passive, i.e. there exists no $\epsilon > 0$ such that
  $\hat{G}_1(\lambda) + \hat{G}_1(\lambda)^* \geq \epsilon \hat{G}_1(\jw)^*\hat{G}_1(\jw)$ for all $\jw\in\pS$. This means there exists
  $\lambda_0\in\pS$ for which $\hat{G}_1(\lambda_0)$ is passive but not output strictly passive.

  If $\hat{G}_1(\lambda_0) \in \Real^{n \times n}$ (is real), then by Lemma~\ref{lem: real_congr} there exists nonsingular $T \in \Real^{n \times n}$
  such that
  \[
  \hat{G}_1(\lambda_0) = T \begin{bmatrix}
V(\theta^{(k)}) & 0 & 0 \\
0 & D_\ell & 0 \\
0 & 0 & \mathbf{0}_{n - 2k - 2\ell}
\end{bmatrix} T^T.
  \]
By Lemma~\ref{lem: real_congr}, we have $\theta_1=\frac{\pi}{2}$ or $\ell >0$, or both. In any event, 
let $\hat{G}_2$ be the following (real) matrix (in case $\ell=0$, $E_{\ell}$ is removed)
   \[
  \hat{G}_2 := T^{-T} \begin{bmatrix}
V(\theta^{(k)}) & 0 & 0 \\
0 & E_\ell & 0 \\
0 & 0 & \mathbf{0}_{n - 2k - 2\ell}
\end{bmatrix}  T^{-1},
  \]
where $E_{\ell}$ is a direct sum of $\ell$ copies of the block
\[
\TwoTwo{1}{0}{-2}{1}.
\]
One can readily verify that $\hat{G}_2$ is passive and and $\det(I + \hat{G}_2\hat{G}_1(\lambda_0)) = 0$, i.e. $[-G_1, G_2]$ is unstable.

In contrast, if $\hat{G}_1(\lambda_0) \in \Complex^{n \times n}$ is not a real matrix, then by Lemma~\ref{lem:2} there exists nonsingular
$T \in \Complex^{n \times n}$ such that
  \[
  \hat{G}_1(\lambda_0) = T \begin{bmatrix}
U(\theta^{(k)}) & 0 & 0 \\
0 & D_{\ell} & 0 \\
0 & 0 & \mathbf{0}_{n-k-2\ell}
\end{bmatrix} T^*.
  \]
  Again by Lemma~\ref{lem: not_OSP}, at least one of the conditions: $\theta_1 = \frac{\pi}{2}$, $\theta_k = -\frac{\pi}{2}$, $\ell > 0$, holds. Let 
\[
X := T^{-*} \begin{bmatrix}
U(\theta^{(k)}) & 0 & 0 \\
0 & E_{\ell} & 0 \\
0 & 0 & \mathbf{0}_{n-k-2\ell}
\end{bmatrix} T^{-1}.
\]
Note that $X$ is passive and $\det(I + X\hat{G}_1(\lambda_0)) = 0$. The remainder of the proof is dedicated to constructing a passive, stable $G_2$
such that $\hat{G}_2(\lambda_0) = X$, in which case $[-G_1, G_2]$ is unstable.

By \cite[Lem. 1]{KKS20}, $I + X$ is nonsingular and $Y := (I + X)^{-1}(I - X)$ satisfies $\bar{\sigma}(Y) = 1$. Using \cite[Lem. 1.14]{Vin01}, one may
construct a $\hat{Q} \in \RHinf$ such that $\hat{Q}(\lambda_0) = Y$, $\|\hat{Q}\|_\infty = 1$, and $\bar{\sigma}(\hat{Q}(\lambda)) < 1$ for all
$\lambda \in \pS \setminus \{\lambda_0, \lambda_0^*\}$. Note that $X = (I - Y)(I + Y)^{-1}$, thus $(I + \hat{Q})^{-1}(\lambda)$ is well-defined at $\lambda = \lambda_0$, and together with the fact $\bar{\sigma}(\hat{Q}(\lambda)) < 1$ for all
$\lambda \in \pS \setminus \{\lambda_0, \lambda_0^*\}$ these imply that
$(I + \hat{Q})^{-1} \in \RHinf$. Now let $\hat{G}_2 := (I - \hat{Q})(I + \hat{Q})^{-1}$, then $G_2$ is stable, passive \cite[Lem. 2]{KKS20}, and
$\hat{G}_2(\lambda_0) = X$. 
\end{proof}

\section{Acknowledgement}

Useful discussions with Chao Chen and Di Zhao are gratefully acknowledged.



\begin{thebibliography}{10}

\bibitem{Ang06}
D.~Angeli.
\newblock Systems with counterclockwise input-output dynamics.
\newblock {\em IEEE Trans. Autom. Contr.}, 51(7):1130--1143, 2006.

\bibitem{AMP16}
M.~Arcak, C.~Meissen, and A.~Packard.
\newblock {\em Networks of Dissipative Systems: Compositional Certification of
  Stability, Performance, and Safety}.
\newblock Springer, 2016.

\bibitem{BaoLee07}
J.~Bao and P.~L. Lee.
\newblock {\em Process Control: The Passive Systems Approach}.
\newblock Advances in Industrial Control. Springer, 2007.

\bibitem{colgate-hogan}
J.~E. Colgate and N.~Hogan.
\newblock Robust control of dynamically interacting systems.
\newblock {\em International Journal of Control}, 48(1):65--88, 1988.

\bibitem{CurZwa95}
R.~F. Curtain and H.~J. Zwart.
\newblock {\em An Introduction to Infinite-Dimensional Linear Systems Theory}.
\newblock Texts in Applied Mathematics 21. Springer-Verlag, 1995.

\bibitem{GGLD90}
M.~Green, K.~Glover, D.~Limebeer, and J.~C. Doyle.
\newblock A {$J$-spectral} factorization approach to $\mathscr{H}_{\infty}$
  control.
\newblock {\em SIAM J. Control Optim.}, 28:1350--1371, 1990.

\bibitem{GreLim95}
M.~Green and D.~J.~N. Limebeer.
\newblock {\em Linear Robust Control}.
\newblock Information and System Sciences. Prentice-Hall, 1995.

\bibitem{IwaHar95}
T.~Iwasaki and S.~Hara.
\newblock Generalized {KYP} lemma: Unified frequency domain inequalities with
  design applications.
\newblock {\em IEEE Trans. Autom. Contr.}, 50(1):41--59, 1995.

\bibitem{IwaHar98}
T.~Iwasaki and S.~Hara.
\newblock Well-posedness of feedback systems: Insights into exact robustness
  analysis and approximate computations.
\newblock {\em IEEE Trans. Autom. Contr.}, 43(5):619--630, 1998.

\bibitem{KKS_CDC19}
C.-Y. Kao, S.~Z. Khong, and A.~van~der Schaft.
\newblock On the converse passivity theorems for {LTI} systems.
\newblock {\em submitted to 58th IEEE Conference on Decision and Control},
  2019.

\bibitem{Kho11}
S.~Z. Khong.
\newblock {\em Robust stability analysis of linear time-varying feedback
  systems}.
\newblock PhD thesis, The University of Melbourne, 2011.

\bibitem{KLR16}
S.~Z. Khong, E.~Lovisari, and A.~Rantzer.
\newblock A unifying framework for robust synchronisation of heterogeneous
  networks via integral quadratic constraints.
\newblock {\em IEEE Trans. Autom. Contr.}, 2016.
\newblock In press.

\bibitem{KhoSch18}
S.~Z. Khong and A.~van~der Schaft.
\newblock On the converse of the passivity and small-gain theorems for
  input-output maps.
\newblock {\em Automatica}, 97:58--63, 2018.

\bibitem{Kim97}
H.~Kimura.
\newblock {\em Chain-scattering approach to $\boldsymbol{H}_{\infty}$ control}.
\newblock Birkhauser, Boston, MA, USA, 1997.

\bibitem{KMXGA14}
N.~Kottenstette, M.~J. McCourt, M.~Xia, V.~Gupta, and P.~J. Antsaklis.
\newblock On relationships among passivity, positive realness, and
  dissipativity in linear systems.
\newblock {\em Automatica}, 50:1003--1016, 2014.

\bibitem{LBEM13}
R.~Lozano, B.~Brogliato, O.~Egeland, and B.~Maschke.
\newblock {\em Dissipative Systems Analysis and Control: Theory and
  Applications}.
\newblock Springer, 2013.

\bibitem{MJKR10}
A.~Megretski, U.~J\"{o}nsson, C.-Y. Kao, and A.~Rantzer.
\newblock {\em The Control Handbook}, chapter Integral quadratic constraints.
\newblock Second edition, 2010.

\bibitem{MegRan97}
A.~Megretski and A.~Rantzer.
\newblock System analysis via integral quadratic constraints.
\newblock {\em IEEE Trans. Autom. Contr.}, 42(6):819--830, 1997.

\bibitem{MegTre93}
A.~Megretski and S.~Treil.
\newblock Power distribution inequalities in optimization and robustness of
  uncertain systems.
\newblock {\em J. Math. Syst., Estimat. Control}, 3(3):301--319, 1993.

\bibitem{PetLan10}
I.~R. Petersen and A.~Lanzon.
\newblock Feedback control of negative imaginary systems.
\newblock {\em IEEE Control System Magazine}, 30(5):54--72, 2010.

\bibitem{Str15}
S.~Stramigioli.
\newblock Energy-aware robotics.
\newblock In M.~K. Kamlibel, A.~A. Julius, R.~Pasumarthy, and J.~M.~A.
  Scherpen, editors, {\em Mathematical Control Theory {I}: Nonlinear and Hybrid
  Control Systems}, Lecture Notes in Control and Information Sciences,
  chapter~3, pages 37--50. Springer, 2015.

\bibitem{Sch17}
A.~van~der Schaft.
\newblock {\em $L_2$-Gain and Passivity Techniques in Nonlinear Control}.
\newblock Springer, 3rd ($1$st edition 1996, $2$nd edition 2000) edition, 2017.

\bibitem{Vid81}
M.~Vidyasagar.
\newblock {\em Input-Output Analysis of Large-Scale Interconnected Systems}.
\newblock Springer-Verlag, 1981.

\bibitem{Wil72}
J.~C. Willems.
\newblock Dissipative dynamical systems part {I}: General theory and part {II}:
  Linear systems with quadratic supply rates.
\newblock {\em Arch. Rational Mechanics Analysis}, 45(5):321--393, 1972.

\bibitem{ZDG96}
K.~Zhou, J.~C. Doyle, and K.~Glover.
\newblock {\em Robust and Optimal Control}.
\newblock Prentice-Hall, Upper Saddle River, NJ, 1996.

\end{thebibliography}

\begin{thebibliography}{1}
\providecommand{\url}[1]{#1}
\csname url@samestyle\endcsname
\providecommand{\newblock}{\relax}
\providecommand{\bibinfo}[2]{#2}
\providecommand{\BIBentrySTDinterwordspacing}{\spaceskip=0pt\relax}
\providecommand{\BIBentryALTinterwordstretchfactor}{4}
\providecommand{\BIBentryALTinterwordspacing}{\spaceskip=\fontdimen2\font plus
\BIBentryALTinterwordstretchfactor\fontdimen3\font minus
  \fontdimen4\font\relax}
\providecommand{\BIBforeignlanguage}[2]{{%
\expandafter\ifx\csname l@#1\endcsname\relax
\typeout{** WARNING: IEEEtran.bst: No hyphenation pattern has been}%
\typeout{** loaded for the language `#1'. Using the pattern for}%
\typeout{** the default language instead.}%
\else
\language=\csname l@#1\endcsname
\fi
#2}}
\providecommand{\BIBdecl}{\relax}
\BIBdecl

\bibitem{KhoKao21}
S.~Z. Khong and C.-Y. Kao, ``Converse theorems for integral quadratic
  constraints,'' \emph{IEEE Transactions on Automatic Control}, 2020. 
  DOI: 10.1109/TAC.2020.3024269. Available on IEEE Xplore.

\bibitem{KKS20}
C.-Y. Kao, S.~Z. Khong, and A.~van~der Schaft, ``On the converse passivity
  theorems for {LTI} systems,'' in \emph{Proceedings of 2020 IFAC World Congress}, 
  Berlin, Germany, 2020.

\bibitem{DokIkr02}
D.~Z. Dokovi\'{c} and K.~D. Ikramov, ``On the congruence of square real
  matrices,'' \emph{Linear Algebra and Its Applications}, vol. 353, pp.
  149--158, 2002.

\bibitem{JohFur01}
C.~R. Johnson and S.~Furtado, ``A generalization of {Sylvester's} law of
  inertia,'' \emph{Linear Algebra and Its Applications}, vol. 338, pp.
  287--290, 2001.

\bibitem{FurJoh03}
S.~Furtado and C.~R. Johnson, ``Spectral variation under congruence for a
  nonsingular matrix with $0$ on the boubound of its field of values,''
  \emph{Linear Algebra and Its Applications}, vol. 359, pp. 67--78, 2003.

\bibitem{Vin01}
G.~Vinnicombe, \emph{Uncertainty and Feedback --- $\mathscr{H}_{\infty}$
  loop-shaping and the $\nu$-gap metric}.\hskip 1em plus 0.5em minus
  0.4em\relax London: Imperial College Press, 2001.

\end{thebibliography}
\end{document}